\newtheorem{lem}{Lemma}[section]
\newtheorem{thm}[lem]{Theorem}
\newtheorem{prop}[lem]{Proposition}
\theoremstyle{definition}
\newtheorem{defn}[lem]{Definition}
\newtheorem{example}[lem]{Example}
\newtheorem{rem}[lem]{Remark}
  \title{Sparse Reconstruction with Multiple Walsh matrices}
  \date{}
  \author{Enrico Au-Yeung}
\address{ Enrico Au-Yeung\\
          Department of Mathematical Sciences \\
         DePaul University\\
         Chicago}
 \email{eauyeun1@depaul.edu}
\begin{document}

  \begin{abstract}
  The problem of how to find a sparse representation of a signal is an important one in applied and computational harmonic analysis.
It is closely related to the problem of how to reconstruct a sparse vector from its projection in a much lower-dimensional vector space.
This is the setting of compressed sensing, where the projection is given by a matrix with many more columns than rows.   
We introduce a class of random matrices that can be used to reconstruct
sparse vectors in this paradigm.  These matrices satisfy the restricted isometry property with overwhelming probability.  We also discuss an application in dimensionality reduction where we initially discovered this class of matrices.
\end{abstract}


\maketitle

 \section{Introduction and Motivation}
 
 In an influential survey paper by Bruckstein, Donoho, and Elad, the problem of finding a sparse solution to an underdetermined linear system is discussed in great detail \cite{BrucksteinDonohoElad2009}.  This is an important problem in applied and computational harmonic analysis.
 Their survey provides plenty of inspiration for future directions of research, with both theoretical and practical consideration.  To make this presentation complete, we provide a brief overview.\\
 
 To motivate our discussion, we start by reviewing how sparsity and redundancy are brought to use.  
 Suppose we have a signal which we regard as a nonzero vector $y \in \mathbb{R}^n$ and there are two available 
orthonormal bases $\Psi$ and $\Phi$.  Then the vector can be expressed as a linear combination of the columns of $\Psi$ or as a linear
combination of the columns of $\Phi$,  \[ y =  \Psi \alpha = \Phi \beta. \]
An important example is to take $\Psi$ to be the identity matrix, and $\Phi$ to be the matrix for discrete cosine transform.
In this case,   $\alpha$ is the representation of the signal in the time domain (or space domain) and $\beta$ is the representation in the frequency domain.
For some pairs of orthonormal bases, such as the ones we have just mentioned, either the coefficients $\alpha$ can be sparse, or the $\beta$ can be sparse, but they cannot both be sparse.  This interesting phenomenon is sometimes called the Uncertainty Principle $\colon$
\[ \left\|   \alpha \right\|_{0} + \left\|   \beta  \right\|_{0} \geq 2 \sqrt{n}. \]
Here, we have written $ \left\|   \alpha \right\|_{0}$ to denote the sparsity of $\alpha$, which is the number of nonzero entries in the vector.  This means that
a signal cannot have fewer than $\sqrt{n}$ nonzero entries in both the time domain and the frequency domain.
Since the signal is sparse in either the time domain or the frequency domain, but not in both, this leads to the idea of combining the two bases by concatenating the two matrices into one matrix $A = \left[ \Psi \ \Phi \right]$. \\

   By a representation of the signal $y$, we mean a column vector $x$ so that $y = Ax$.  The representation of the signal is not unique because the column vectors of $A$ are not linearly independent.  From this observation, we are naturally led to consider a matrix $A$ formed by combining more than two bases.  The hope is that among the many possible ways of representing the signal $y$, there is at least one representation that is very sparse, i.e. most entries of $x$ are zero.  We want the vector $x$ to be $s$-sparse,  which means that at most $s$ of the entries are nonzero.  A natural question that arises is how to find the sparse representation of a given signal $y$.\\
   
    There is a closely related problem  that occurs commonly in signal and image processing.   Suppose we begin with a vector $x \in \mathbb{R}^N$ that is $s$-sparse, which we consider to be our compressible signal.  Using the matrix $A \in \mathbb{R}^{n \times N}$,  we observe the vector $y$ from the projection $y = Ax$. This leads to the following problem: given a matrix $A \in \mathbb{R}^{n \times N}$, where typically $N$ is much larger than $n$, and given $y \in \mathbb{R}^n$,
how to recover the $s$-sparse vector $x  \in \mathbb{R}^N$ from the observation
$y = A x$.   The term most commonly used in this setting is compressed sensing.   This problem is NP-hard, i.e. the natural approach to consider all possible $s$-sparse vectors in $\mathbb{R}^N$ is not feasible. 
The reconstruction of the vector $x$ is accomplished by a non-linear operator $\Delta \colon \mathbb{R}^N \rightarrow \mathbb{R}^n$ that solves the minimization problem,
\[(P1) \quad \min \| x \|_1 \quad \text{ subject to } y = Ax. \]
The following definition plays a central role in this paper.
\begin{defn}
A matrix $A \in \mathbb{R}^{m \times N}$ is said to have the restricted isometry property (RIP) of order s and level $\delta_{s} \in (0,1)$ if
\[  (1- \delta_{s}) \| x \|_{2}^{2} \leq \| A x \|_{2}^{2} \leq (1 + \delta_{s}) \| x \|_{2}^2 \quad \mbox{ for all s-sparse } x \in \mathbb{R}^{N}.  \]
\end{defn}
The restricted isometry property says that the columns of any sub-matrix with at most $s$ columns are close to being orthogonal to each other.
If the matrix $A$ satisfies this property, then the solution to (P1) is unique, i.e. it is possible to reconstruct the $s$-sparse vector  by minimizing the $l_1$ norm of $x$, subject to $y = Ax$.
For this reason, matrices that satisfy the RIP play a key role in compressed sensing.  Some examples of random matrices that satisfy the RIP are the Gaussian, Bernoulli, or partial random Fourier
matrices.  

 From the foundational papers of Donoho \cite{Donoho2006} and Candes, Romberg, and Tao \cite{CandesRombergTao2006a, CandesRombergTao2006b}, the field of compressed sensing has been studied and extended by many others to include a broad range of theoretical issues and applications; see, for example, \cite{Candes2008, CohenDahmenDeVore2009, BaraniukDavenportDeVoreWakin2008, BlumensathDavies2009, MendelsonPajorJaegermann2008, BaraniukCevherDuarte2010, DonohoTanner2009, Foucart2010, Tropp2004, Tropp2006, Rauhut2007, RudelsonVershynin2008, RauhutWard2012, KrahmerWard2012, TroppRombergBaraniuk2010}, and the comprehensive treatment found in \cite{FoucartRauhut2013}.\\

  The search for structured matrices that can be used in compressed sensing continues to be an active research area (see, e.g., \cite{Rauhut2010}.) Towards that goal, our contribution is to introduce a class of random matrices that satisfy the RIP with overwhelming probability.  We also describe an application where we initially discovered this class of matrices. \\

\subsection{Application}
Dimensionality reduction is another area where matrices that satisfy the RIP play an important role.  
A powerful tool is the Johnson-Lindenstrauss (JL) lemma.  This lemma tells us that the distance between each pair of points in a high-dimensional space is nearly preserved if we project the points into a much lower-dimensional space using a random linear mapping.   Krahmer and Ward \cite{KrahmerWard2012} showed that if a matrix satisfies the RIP, then we can use it to create such a mapping if we randomize the column signs of the matrix.  For a precise statement, see \cite{KrahmerWard2012}.  Together with our matrix that satisfies the RIP,  their result allows one to create a matrix to be used in a JL-type embedding.  To demonstrate this in a concrete setting, let us turn to an application in robust facial recognition. \\

The goal of object recognition is to use training samples from $k$ distinct object classes to determine the class to which a new sample belongs.  We arrange the given $n_j$ training samples from the $j$-th class as columns of a matrix $Y_j \equiv [v_{j,1}, v_{j,2}, \ldots, v_{j, n_j} ] \in \mathbb{R}^{m \times n_j}$.   In the context of a face recognition system, we identify a $w \times h$ facial image with the vector $v \in \mathbb{R}^{m}$ ($m = wh$) given by stacking its columns.  Therefore, the columns of $Y_j$ are the training facial images of the $j$-th person.  One effective approach for exploiting the structure of the $Y_j$ in object recognition  is to model the samples from a single class as lying on a linear subspace.  Subspace models are flexible enough to capture the structure in real data, where it has been demonstrated that the images of faces under varying lighting and expressions lie on a low-dimensional subspace \cite{BasriJacobs1997}.   For our present discussion, we will assume that the training samples from a single class do lie on a single subspace.\\

Suppose we are given sufficient training samples of the $j$-th object class, \[Y_j \equiv [v_{j,1}, v_{j,2}, \ldots, v_{j, n_j} ] \in \mathbb{R}^{m \times n_j}.\]  Then,  any new  sample $y_{new} \in \mathbb{R}^{m}$ from the same class will approximately lie in the linear span of the training samples associated with object $j$, i.e.
\[ y_{new} = c_{j,1} v_{j,1} + c_{j,2} v_{j,2} + \ldots c_{j, n_j} v_{j, n_j}, \]
for some coefficients $c_{j,k} \in \mathbb{R}, 1 \leq k \leq n_{j}$.  We define a new matrix $\Phi$ for the entire training set as the concatenation of the $n$ training samples of all $k$ object classes, \[\Phi = [Y_1, Y_2, Y_3, \ldots, Y_k] = [v_{1,1}, v_{1,2}, \ldots, v_{2,1}, v_{2,2}, \ldots, v_{k, n_k}]. \]
The new sample $y_{new} \in \mathbb{R}^m$ can be expressed as a linear combination of all training samples, $y_{new} = \Phi x$, where   the transpose of the vector $x$ is of the form,
\[ x = [0, 0, \ldots, 0, c_{j,1}, c_{j,2}, \ldots, c_{j, n_j}, 0, 0, \ldots, 0] \in \mathbb{R}^n,\]
i.e. $x$ is the coefficient vector whose entries are zero, except those entries associated with the $j$-th class.   The sparse vector $x$ encodes the identity of the new sample $y_{new}$.  The task of classifying a new sample amounts to solving the linear system $y_{new} = \Phi x$ to recover the sparse vector $x$.  
For more details, see \cite{WrightYangSastryMa2009}, where the authors  presented strong experimental evidence to support this approach to robust facial recognition.\\

One practical issue that arises is that for face images without any pre-processing, the corresponding linear system $y = \Phi x$ is very large.  For example, 
if each face image is given at a typical resolution of $640 \times 480$ pixels, then the matrix $\Phi$ has $m$ rows, where $m$ is in the order of $10^{5}$.  Using scalable algorithms, such as linear programming, applying this directly to high-resolution images still requires enormous computing power.  Dimensionality reduction becomes indispensable in this setting. The projection from the image space to the much lower-dimensional  feature space can be represented by a matrix $P$,  where $P$ has many more columns than rows.  The linear system $y = \Phi x$ then becomes
\[ \widetilde{y} \equiv Py = P \Phi x.\]
The new sample $y$ is replaced by its projection $\widetilde{y}$. The sparse vector $x$ is reconstructed by solving the minimization problem,
\[  \min \| x \|_1 \quad \text{ subject to } y = P \Phi x. \]
In the past, enormous amount of effort was spent to develop feature-extraction methods for finding projections of images into lower-dimensional spaces.  Examples of feature-extraction methods include EigenFace, FisherFace, and a host of creative techniques; see, e.g. \cite{HespanhaKriegman1997}.
  For the approach to facial recognition that we have described, choosing a matrix $P$ is no longer a difficult task.  We can select a matrix $P$ so that it nearly preserves the distance between every pair of vectors, i.e. $\|Px - P y\|_2 \approx \| x - y \|_2$.  As mentioned earlier,  beginning with a matrix $A$ that satisfies the RIP,  the result of Krahmer and Ward allows one to create a matrix $P$ to be used in a JL-type embedding. 



\subsection{Notation}\label{Notation}
Before continuing further, we need to define some  terminology.  The Rademacher system $\{r_{n}(x)\}$ on the interval $[0,1]$ is a set of orthogonal functions defined by
\[ r_{n}(x) = \mbox{sign}( \sin(2^{n+1} \pi x)) ; \quad n = 0, 1, 2, 3, \ldots\]
The Rademacher system does not form a basis for $L^2([0,1])$, but this can be remedied by considering the Walsh system of functions.  
Each Walsh function is a product of Rademacher functions.  
The sequence of Walsh functions is defined as follows.  Every positive integer $n$ can be written in the binary system as:
\[ n = 2^{n_1} + 2^{n_2} +  \ldots + 2^{n_k},\]
where the integers $n_j$ are uniquely determined by $n_{j+1} < n_{j}.$
The Walsh functions $\{W_n(x)\}_{n=0}^{\infty}$ are then given by
\[W_{0}(x) = 1, \ W_{n}(x) = r_{n_1}(x) r_{n_2}(x) \ldots r_{n_k}(x). \]
The Walsh system forms an orthogonal basis for $L^2([0,1])$.
There is a convenient way to represent these functions as vectors.  Define the matrices
$H_{0} = 1,$ and for $n\geq 1$,   \[ H_{n} = \frac{1}{\sqrt{2}}
    \left[ \begin{array}{cc}
             H_{n-1} & -H_{n-1}   \\
             H_{n-1} & H_{n-1}
             \end{array}  \right].
                   \]
Then, the column vectors of $H_n$ form an orthogonal basis on $R^{2^n}$.  Note that the matrix $H_n$ has $2^n$ rows and $2^n$ columns.    Because of its close connection to the Walsh system, a matrix of the form $H_n$ is called a Hadamard-Walsh matrix.\\
 
The inner product of two vectors $x$ and $y$ is denoted by $\langle x, y \rangle$. The Euclidean norm of a vector $x$ is denoted by $\| x \|_{2}.$
  If a vector has at most s nonzero entries, 
we say that the vector is s-sparse.  For clarity, we often label constants by $C_1, C_2, \ldots$, but we do not keep track of their precise values. \\

For a matrix $A \in \mathbb{R}^{m \times N}$, its operator norm is
 $ \| A \| = \sup \{ \| Ax \|_{2} \colon \|x\|_{2} = 1 \} $.   If $x \in \mathbb{R}^N$, then  we say that $\Gamma$ is the support set of the vector if the  entries of $x$ are nonzero only on the set $\Gamma$, and we write $\mbox{supp}(x) = \Gamma$.  
  We define
$B_{\Gamma} = \{ x \in \mathbb{R}^{N} \colon \|x\|_{2} = 1, \ \mbox{supp}(x) = \Gamma\}.$ We write $A^{\ast}$ for the adjoint (or transpose) of the matrix.   Working with s-sparse vectors, there is another norm defined by
 \[ \| A \|_{\Gamma} = \sup \{ | \langle Ax, y \rangle | \colon  x \in B_{\Gamma},  \ y \in B_{\Gamma}, \ | \Gamma | \leq s \}. \]
 This norm is important because if the matrix $A$ obeys the relation
$ \| I - A^{\ast}A \|_{\Gamma} \leq \delta_{s}, $
then  $A$ satisfies  the RIP of order s and level $\delta_s$.\\

Let us introduce a model called Sparse City. 
From now on, we fix a positive integer $m$ that is a power of 2, i.e. $m = 2^{k}$, and focus on a single Hadamard-Walsh matrix $W$ with $m$ rows and $m$ columns.  Let $W_{n}^m$ be the $m \times n$ matrix formed by selecting the first $n$ columns of the matrix $W$. 
 Let $\Theta$ be a bounded random variable with expected value zero, i.e.
\[ E(\Theta) = 0, | \Theta | \leq B. \]
To be precise, the random variable $\Theta$ is equally likely to take one of the four possible values, $\{1, -1, 3, -3\}$.
Define the random vectors $x_1, x_2, \ldots x_b \in \mathbb{R}^{m}$, so that the entries of each vector are independent random variables drawn from the same
probability distribution as $\Theta$.  
For each vector $x_j = (\theta_{j1}, \theta_{j2}, \ldots, \theta_{jm})$, we have $E(\theta_{jw}) = 0$ and $| \theta_{jw} | \leq B,$ for $1 \leq w \leq m$.
We associate a matrix $D_j$ to each vector $x_j$, so that each $D_j \in \mathbb{R}^{m \times m}$ 
is a diagonal matrix with the entries of $x_j$ along the diagonal.
To construct the matrix $A$, we concatenate $b$ blocks of $D_{j} W_{n}^{m}$, so that written out in block form,
 \[
A = \left[ \begin{array}{ccccc}
               D_1 W_{n}^{m} \ | & D_2 W_{n}^{m} \ | & D_3 W_{n}^{m} \ | & D_4 W_{n}^{m} \ | \ \ldots \ldots  \ | & D_{b} W_{n}^{m} \end{array} \right].
   \]
 Note that the matrix $A$ has $m$ rows and $nb$ columns. In our application, Walsh matrices are more appropriate than other orthogonal matrices, such as discrete cosine transforms (DCT).  For illustration, if $A \in \mathbb{R}^{1024 \times 20480}$, with $b = 320$ blocks, then each entry of $64 A$ is one of the four values $\{1, -1, 3, -3\}$.  Consider any vector $y$ that contains only integer values, ranging from 0 to 255, which are typical for facial images.  The product $Ay$ can be computed from $64 \times Ay$; the calculation of $64 \times Ay$ uses only integer-arithmetic operations.


\subsection{Main results}
Our first result is that the matrix  satisfies the RIP in expectation.

\begin{thm}\label{MainTheorem1}
Let $W$ be the Hadamard-Walsh matrix with $m$ rows and $m$ columns.  Let $W_{n}^m$ be the $m \times n$ matrix formed by selecting the first $n$ columns of the matrix $W$. 
The matrix $A \in \mathbb{R}^{m \times nb}$ is constructed by concatenating $b$ blocks of $D_{j} W_{n}^{m}$, so that
 \[
A = \left[ \begin{array}{ccccc}
               D_1 W_{n}^{m} \ | & D_2 W_{n}^{m} \ | & D_3 W_{n}^{m} \ | & D_4 W_{n}^{m} \ | \ \ldots \ldots  \ | & D_{b} W_{n}^{m} \end{array} \right].
   \]
   Each $D_j \in \mathbb{R}^{m \times m}$ is a diagonal matrix, as defined in section (\ref{Notation}).
Then, there exists a constant $C > 0$ such that for any $0 < \delta_{s} \leq 1$,  we have
\begin{equation*}
E\| I - A^{\ast}A \|_{\Gamma} \leq \delta_{s}
\end{equation*}
provided that \[m \geq C \cdot \delta_{s}^{-2} \cdot s \cdot \log^{4}(nb) \] and $m \leq nb$. \\ More precisely, there are constants $C_{1}$ and $C_{2}$ so that 
\begin{equation}\label{eqn:MainThmNormBound}
 E\| I - A^{\ast}A \|_{\Gamma} \leq \sqrt{\frac{C_{1} \cdot s \cdot \log^{2}(s)  \cdot \log(mb) \cdot \log(nb)}{m}}
\end{equation} provided that
\[ m \geq C_{2} \cdot s \cdot \log^{2}(s) \log(mb) \log(nb). \]

\end{thm}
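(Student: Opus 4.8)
The plan is to recast $E\,\|I-A^{\ast}A\|_{\Gamma}$ as the expected supremum of a second-order chaos process and then control it by Dudley-type chaining together with metric-entropy estimates, following the now-standard route for the restricted isometry property of structured random matrices (see, e.g., \cite{FoucartRauhut2013,Rauhut2010}). First I would reduce the quantity. Since $I-A^{\ast}A$ is symmetric, $\|I-A^{\ast}A\|_{\Gamma}=\sup\{\,\bigl|\,\|Ax\|_2^2-\|x\|_2^2\,\bigr|:\|x\|_2=1,\ \|x\|_0\le s\,\}$. Split $x=(x^{(1)},\dots,x^{(b)})$ according to the $b$ blocks, put $u^{(j)}=W_n^m x^{(j)}$, and collect the diagonal entries into a single vector $\varepsilon=(\theta_{jw})_{j,w}$ of $mb$ independent, symmetric, bounded --- hence sub-Gaussian --- scalars. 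A direct computation gives $(Ax)_w=\sum_j\theta_{jw}u^{(j)}_w$, so $\|Ax\|_2^2=\|V_x\varepsilon\|_2^2$, where $V_x$ is the $m\times mb$ matrix that is block-diagonal in the index $w$ with $w$-th block the row vector $(u^{(1)}_w,\dots,u^{(b)}_w)$. Using $E(\theta_{iw}\theta_{jw})=\delta_{ij}E(\Theta^2)$ and the orthonormality of the columns of $W_n^m$ one gets $E\|V_x\varepsilon\|_2^2=E(\Theta^2)\|x\|_2^2$; tacitly normalizing $A$ so that this equals $\|x\|_2^2$ (which only rescales the constants), the task becomes to bound $E\sup_{B\in\mathcal B}\bigl|\,\|B\varepsilon\|_2^2-E\|B\varepsilon\|_2^2\,\bigr|$ over $\mathcal B=\{V_x:\|x\|_2=1,\ \|x\|_0\le s\}$.

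For this I would invoke the suprema-of-chaos-processes bound: the expectation above is at most a constant times $\gamma_2(\mathcal B,\|\cdot\|)^2+\gamma_2(\mathcal B,\|\cdot\|)\,d_F(\mathcal B)+d_F(\mathcal B)\,d_{2\to 2}(\mathcal B)$, where $\|\cdot\|$ is the operator norm, $\gamma_2$ is Talagrand's functional, and $d_F,d_{2\to 2}$ are the Frobenius and operator-norm radii of $\mathcal B$. The two radii are immediate from the flatness of the Walsh matrix: every entry of $W$ equals $\pm m^{-1/2}$, so $|u^{(j)}_w|\le m^{-1/2}\|x^{(j)}\|_1\le (\|x^{(j)}\|_0/m)^{1/2}\|x^{(j)}\|_2$, which together with $\sum_j\|x^{(j)}\|_0\le s$ and $\sum_j\|x^{(j)}\|_2^2=1$ yields $\|V_x\|^2=\max_w\sum_j|u^{(j)}_w|^2\le s/m$, hence $d_{2\to 2}(\mathcal B)\le\sqrt{s/m}$; and orthonormality of the columns of $W_n^m$ gives $\|V_x\|_F^2=\sum_j\|u^{(j)}\|_2^2=\sum_j\|x^{(j)}\|_2^2=1$, hence $d_F(\mathcal B)=1$. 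Consequently, once the $\gamma_2$ estimate below is in hand, the term $\gamma_2\cdot d_F=\gamma_2$ will dominate for $m$ past the stated threshold (then $\gamma_2\le 1$ forces $\gamma_2^2\le\gamma_2$, and $d_F d_{2\to 2}=\sqrt{s/m}\le\gamma_2$ since every logarithm is at least $1$), so the whole bound collapses to $\gamma_2(\mathcal B,\|\cdot\|)$, which is to be compared with \eqref{eqn:MainThmNormBound}.

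The heart of the argument --- and the step I expect to absorb most of the work --- is the covering-number estimate feeding the Dudley bound $\gamma_2(\mathcal B,\|\cdot\|)\lesssim\int_0^{\sqrt{s/m}}\sqrt{\log N(\mathcal B,\|\cdot\|,u)}\,du$. Since $x\mapsto V_x$ is linear and $\|x\|_1\le\sqrt s$, one has $\mathcal B\subseteq\sqrt s\cdot\mathrm{conv}\{\pm V_{e_k}:1\le k\le nb\}$, where the atom $V_{e_k}$ --- associated with a single column of one of the $D_jW_n^m$ --- is block-diagonal with exactly one nonzero entry, of modulus $m^{-1/2}$, in each $w$-block, so $\|V_{e_k}\|=m^{-1/2}$ and $\|V_{e_k}\|_F=1$. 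The empirical (Maurey) method applied to this convex hull, with the size of the random averages controlled by a matrix Bernstein inequality for $m\times mb$ matrices, gives $\log N(\mathcal B,\|\cdot\|,u)\lesssim u^{-2}\,s\,m^{-1}\log(mb)\log(nb)$ at small scales, while a volumetric bound --- a union over the at most $\binom{nb}{s}$ choices of support, times a net at scale comparable to $u$ of an $s$-dimensional ball of radius $\sqrt{s/m}$ --- gives $\log N(\mathcal B,\|\cdot\|,u)\lesssim s\log(enb/s)+s\log\!\bigl(1+\sqrt{s/m}/u\bigr)$. Splitting the Dudley integral at the crossover scale $u_{*}\sim\sqrt{\log(mb)/m}$ and integrating the two pieces yields $\gamma_2(\mathcal B,\|\cdot\|)\lesssim\sqrt{s\log(mb)\log(nb)/m}\,(1+\log s)$, the factor $\log s$ coming from the $\int\!du/u$ over the Maurey range; this is exactly the right-hand side of \eqref{eqn:MainThmNormBound}. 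The delicate points are getting the four logarithmic factors to appear once each and no more, and keeping every estimate uniform over all supports $\Gamma$ with $|\Gamma|\le s$ --- equivalently, over all ways the $s$-sparse support may be distributed among the $b$ blocks; the radius, atom, and entropy bounds above are written so as to be automatically uniform in this respect, which is why the block structure costs only the single $b$ inside a logarithm. Finally, since the hypotheses force $n\le m\le nb$, the quantities $\log m$, $\log(mb)$ and $\log(nb)$ are all comparable, so imposing that the right-hand side of \eqref{eqn:MainThmNormBound} be at most $\delta_s$ gives the displayed condition $m\ge C_2\,s\,\log^2(s)\log(mb)\log(nb)$, and replacing $\log^2(s)\log(mb)\log(nb)$ by $\log^4(nb)$ (together with $\delta_s\le 1$) produces the simpler condition $m\ge C\,\delta_s^{-2}\,s\,\log^4(nb)$.

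A clean alternative I would keep in reserve exploits that the $m$ row-groups of $A$ involve disjoint diagonal entries, so that $A^{\ast}A=\sum_{w=1}^{m}a_w a_w^{\ast}$ is a sum of $m$ \emph{independent} rank-one operators whose generating vectors satisfy $\|a_w\|_\infty\le B\,m^{-1/2}$, and hence $\|(a_w)_\Gamma\|_2\le B\sqrt{s/m}$ for $|\Gamma|\le s$. The Rudelson--Vershynin/Rauhut template for subsampled bounded orthonormal systems \cite{RudelsonVershynin2008,FoucartRauhut2013} then applies essentially verbatim --- it uses only independence of the $a_w$, this $\ell_\infty$ bound, and the identity $\sum_w E(a_wa_w^{\ast})=I$ --- and, via symmetrization, Rudelson's lemma and a union over supports, delivers an estimate of the same form, $E\,\|I-A^{\ast}A\|_{\Gamma}\lesssim\max\{\gamma,\gamma^2\}$ with $\gamma=\bigl(C_1\,B^2\,s\,\log^2(s)\log(mb)\log(nb)/m\bigr)^{1/2}$ (the precise logarithmic factors being, once more, interchangeable under $n\le m\le nb$).
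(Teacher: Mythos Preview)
Your proposal is correct and both routes you outline would yield the stated bound, but neither is the path the paper takes. The paper works directly with the block structure: it introduces vectors $y_{kw}\in\mathbb{R}^{nb}$ so that $A^{\ast}A=\sum_{k,j,w}\theta_{kw}\theta_{jw}\,y_{kw}\otimes y_{jw}$ and $\sum_{k,w}y_{kw}\otimes y_{kw}=I$, then splits $I-A^{\ast}A$ into a ``diagonal'' piece $Q_1=\sum_{k,w}(1-|\theta_{kw}|^2)\,y_{kw}\otimes y_{kw}$ and an ``off-diagonal'' piece $Q_2=\sum_{j\ne k}\sum_w\theta_{kw}\theta_{jw}\,y_{kw}\otimes y_{jw}$. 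The term $Q_1$ is handled by symmetrization and the original Rudelson--Vershynin lemma; $Q_2$ is treated by decoupling (replacing $\theta_{jw}$ by an independent copy $\theta_{jw}'$), after which it is further split into $Q_3$ and $Q_4$, each controlled by a ``tensorized'' extension of the Rudelson--Vershynin lemma for sums $\sum_i\epsilon_i\,u_i\otimes v_i$ with $u_i\ne v_i$, proved in the paper for this purpose. All three bounds feed back into $E\|I-A^{\ast}A\|_\Gamma$ through a self-referential inequality that is closed by an elementary algebraic lemma. Compared with this, your chaos-process route is more systematic and avoids the ad hoc extension lemma, at the price of importing the $\gamma_2$ machinery. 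Your alternative route is in fact the simplest of the three: the observation that the $m$ rows of $A$ are independent (row $w$ depends only on $\{\theta_{jw}\}_{j=1}^b$) means that one symmetrization over $w$ plus the \emph{standard} Rudelson lemma already closes the argument, so the decoupling step and the extension lemma are unnecessary; the paper does not exploit this. What the paper's approach buys is a self-contained presentation that stays at the level of Dudley's inequality for Gaussian processes, with no appeal to generic chaining or the $\gamma_2$ functional.
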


 The next theorem tells us that  the matrix 
satisfies the restricted isometry property with overwhelming probability.

\begin{thm}\label{Thm:TailBound}
     Fix a constant $\delta_s > 0$.  Let $A$ be the matrix specified in Theorem \ref{MainTheorem1}.
 Then, there exists a constant $C > 0$ such that  
  \[ \| I - A^{\ast}A \|_{\Gamma} < \delta_s   \]
  with probability at least $1 - \epsilon$ provided that
  \[ m \geq C \cdot \delta_{s}^{-2} \cdot s \cdot \log^{4}(nb) \cdot \log(1/ \epsilon) \] and $m \leq nb$.  
\end{thm}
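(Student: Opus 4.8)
The plan is to upgrade the in-expectation estimate of Theorem \ref{MainTheorem1} to a probabilistic one by applying a deviation inequality for the supremum of a second-order chaos process. The first move is to recast the quantity of interest. Let $\xi \in \mathbb{R}^{mb}$ be the vector collecting all of the diagonal random variables $\theta_{jw}$, and for $x \in \mathbb{R}^{nb}$ split $x = (x^{(1)}, \dots, x^{(b)})$ into its $b$ blocks of length $n$, putting $u^{(j)} = W_n^m x^{(j)} \in \mathbb{R}^m$. Then $\|Ax\|_2^2 = \|V_x \xi\|_2^2$, where $V_x \in \mathbb{R}^{m \times mb}$ depends linearly on $x$, its $w$-th row carrying the entries $u^{(1)}_w, \dots, u^{(b)}_w$ in the slots indexed by $(1,w),\dots,(b,w)$ and vanishing elsewhere; note in particular that distinct rows of $V_x$ have disjoint supports. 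Since $\mathbb{E}[\theta_{jw}^2]$ is a fixed constant and $W_n^m$ has orthonormal columns, $\mathbb{E}\|V_x\xi\|_2^2$ is that constant times $\|x\|_2^2$, so after the normalization that ensures $\mathbb{E} A^\ast A = I$ we obtain the identity
\[
\|I - A^\ast A\|_\Gamma \;=\; \sup_{x \in \mathcal{X}} \bigl|\, \|V_x\xi\|_2^2 - \mathbb{E}\|V_x\xi\|_2^2 \,\bigr|, \qquad \mathcal{X} := \{\, x \colon \|x\|_2 = 1,\ \|x\|_0 \le s \,\}.
\]
With $\mathcal{V} := \{V_x \colon x \in \mathcal{X}\}$, this is precisely the object to which the deviation inequality for suprema of chaos processes (see \cite{FoucartRauhut2013}) applies --- the $\theta_{jw}$ are bounded, hence subgaussian with an absolute constant, which is what the inequality requires --- so there are constants $c_1, c_2$ with
\[
\mathbb{P}\Bigl( \|I - A^\ast A\|_\Gamma \ge c_1 E + t \Bigr) \;\le\; 2\exp\!\left( -c_2 \min\!\Bigl( \tfrac{t^2}{V^2},\, \tfrac{t}{U} \Bigr) \right) \qquad (t \ge 0),
\]
where, writing $\gamma := \gamma_2(\mathcal{V}, \|\cdot\|_{2\to2})$, $d_F := \sup_x\|V_x\|_F$ and $d := \sup_x\|V_x\|_{2\to2}$, one has $E = \gamma(\gamma + d_F)$, $V = d(\gamma + d_F)$ and $U = d^2$.

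The second step is to estimate these three geometric quantities. Because the rows of $V_x$ are disjointly supported, $V_x V_x^\ast$ is diagonal with $(w,w)$ entry $\sum_j (u^{(j)}_w)^2$; hence $d_F^2 = \sup_x \sum_j \|u^{(j)}\|_2^2 = \sup_x \sum_j \|x^{(j)}\|_2^2 = 1$ and $\|V_x\|_{2\to2}^2 = \max_w \sum_j (u^{(j)}_w)^2$. Here the flatness of the Hadamard--Walsh matrix is what makes the bound work: every entry of $W_n^m$ has modulus $m^{-1/2}$, so $|(W_n^m x^{(j)})_w| \le m^{-1/2}\|x^{(j)}\|_1 \le (s_j/m)^{1/2}\|x^{(j)}\|_2$, where $s_j$ is the number of nonzero entries of $x^{(j)}$; since $\sum_j s_j = |\Gamma| \le s$, summing over $j$ gives $\|V_x\|_{2\to2}^2 \le (s/m)\sum_j \|x^{(j)}\|_2^2 = s/m$, i.e. $d \le (s/m)^{1/2}$. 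For $\gamma$ I would appeal to the chaining estimate that underlies the proof of Theorem \ref{MainTheorem1}: that argument bounds exactly this $\gamma_2$ functional by a constant multiple of $\sqrt{s\,\log^2(s)\,\log(mb)\,\log(nb)/m}$, which under the hypothesis on $m$ is at most (a constant times) $\delta_s$. Combined with $d_F = 1$ and $\delta_s \le 1$, this gives $E \le C_3\,\delta_s$, $V \le C_4\,(s/m)^{1/2}$ and $U \le s/m$.

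It remains to assemble the pieces. Enlarging the constant $C$ in the hypothesis forces $c_1 E \le \delta_s/2$, so taking $t = \delta_s/2$ above, and using that for $0 < \delta_s \le 1$ the first term in the minimum is the smaller one, we obtain
\[
\mathbb{P}\bigl( \|I - A^\ast A\|_\Gamma \ge \delta_s \bigr) \;\le\; 2\exp\!\bigl( -c_5\, \delta_s^2\, m / s \bigr).
\]
This is at most $\epsilon$ as soon as $m \ge c_5^{-1}\,\delta_s^{-2}\,s\,\log(2/\epsilon)$; and since $m \le nb$ lets all the logarithmic factors appearing above be absorbed into $\log(nb)$, the stated requirement $m \ge C\,\delta_s^{-2}\,s\,\log^4(nb)\,\log(1/\epsilon)$ subsumes both this condition and the one used to force $c_1 E \le \delta_s/2$. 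The theorem follows with a single absolute constant $C$.

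The step I expect to be the real obstacle is not any isolated computation but making sure that this refined chaos deviation inequality is genuinely needed and that its parameters are controlled sharply enough. A generic tool such as McDiarmid's bounded-difference inequality applied directly to $\xi \mapsto \|I - A^\ast A\|_\Gamma$ is hopelessly lossy: one coordinate of $\xi$ moves the value by at most about $(s/m)^{1/2}$, so the sum of squared coordinate-wise differences is of order $bs$, with no compensating gain from $m$, and the resulting tail bound actually deteriorates as the number of blocks $b$ grows. One must instead exploit the quadratic structure of $\|Ax\|_2^2$ together with the flatness of the Walsh matrix --- the same two features that drive Theorem \ref{MainTheorem1} --- and most of the labor lies in checking that $d_F$, $d$ and $\gamma$ behave as claimed.
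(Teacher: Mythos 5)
Your strategy is genuinely different from the paper's. The paper never invokes a chaos deviation inequality: it keeps the decomposition $I - A^{\ast}A = Q_1 + Q_2$ from the proof of Theorem \ref{MainTheorem1}, decouples $Q_2$ into $Q_3 + Q_4$, and upgrades each of the three expectation bounds to a tail bound by the scalar concentration inequality for norms of sums of independent symmetric Banach-space-valued random variables (Proposition \ref{Prop:ConcentrationIneq}), preceded for $Q_1$ by a symmetrization-plus-median argument. Your single application of the deviation inequality for suprema of second-order chaos processes is cleaner and, if completed, would give a sharper result: the $\log(1/\epsilon)$ would add to, rather than multiply, the $\log^{4}(nb)$ factor. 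Your reformulation $\| I - A^{\ast}A \|_{\Gamma} = \sup_{x \in \mathcal{X}} |\, \|Ax\|_2^2 - 1 \,|$ is valid because $I - A^{\ast}A$ is symmetric, and the computations $d_F = 1$ and $d \le \sqrt{s/m}$ are correct.

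The gap is the $\gamma_2$ estimate, which carries essentially all of the difficulty and which you dispose of by asserting that the chaining computation inside Theorem \ref{MainTheorem1} ``bounds exactly this $\gamma_2$ functional.'' It does not. The Dudley integrals in Lemma \ref{lem:RudelsonVershynin} and Lemma \ref{lemma:Extension} are taken with respect to pseudo-metrics of the form $\max_{j,w} |\langle y_{jw}, x - y\rangle|$, an $\ell_\infty$ maximum over all $mb$ functionals, whereas $\gamma_2(\mathcal{V}, \|\cdot\|_{2\to2})$ lives on the metric $\|V_x - V_y\|_{2\to2} = \max_{w} \bigl( \sum_{j=1}^{b} \langle y_{jw}, x - y\rangle^2 \bigr)^{1/2}$, an $\ell_\infty$ over rows of an $\ell_2$ aggregate over the $b$ blocks. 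The crude comparison between the two norms costs a factor $\sqrt{b}$, which is fatal. To close the gap you must redo the covering-number estimates (Maurey's empirical method at large scales, the volumetric bound at small scales) directly for this mixed $\ell_\infty(\ell_2)$ norm, exploiting the disjoint supports of the blocks through the pair of inequalities $\|V_z\|_{2\to2} \le m^{-1/2}\|z\|_1$ and $\|V_z\|_{2\to2} \le \sqrt{s/m}\,\|z\|_2$; that is genuinely new work, not a citation to the paper's lemma. (Your normalization remark is well taken, and worth making explicit: with $\Theta$ uniform on $\{\pm 1, \pm 3\}$ one has $E\Theta^2 = 5$, so $E A^{\ast}A = 5I$, and a rescaling of the $\theta_{jw}$ is needed both for your identity and for the paper's own claim that $Q_1'$ has mean zero.)
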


\subsection{Related Work}
Gaussian and Bernoulli matrices satisfy the restricted isometry property (RIP) with overwhelmingly high probability, provided that the number of measurements $m$  satisfies $m = O( s \log(\frac{N}{s}))$. 
Although these matrices require the least number of measurements, they have limited use in practical applications.  Storing an unstructured matrix, in which all the entries of the matrix are independent of each other, requires a prohibited amount of storage.  From a computation and application view point, this has motivated the need to find structured random matrices that satisfy the RIP.  Let us review three of the most popular classes of random matrices that are appealing alternatives to the Gaussian matrices. For a broad discussion and other types of matrices, see \cite{LiCong2015} and \cite{Rauhut2010}. \\

The random subsampled Fourier matrix is constructed by randomly choosing $m$ rows from the $N \times N$ discrete Fourier transform (DFT) matrix.  In this case, it is important to note that the fast Fourier transform (FFT) algorithm can be used to significantly speed up the matrix-by-vector multiplication. A random subsampled Fourier matrix with $m$ rows and $N$ columns satisfies the RIP with high probability, 
provided that $m \geq C \cdot \delta^{2} s \log(N)^4$, where $C$ is a universal constant; see \cite{RudelsonVershynin2008} for a precise statement. \\

The next type of structured random matrices are partial random Toeplitz and circulant matrices.  These matrices naturally arise in applications where convolutions are involved. Recall that for a Toeplitz matrix, each entry $a_{ij}$ in row $i$ and column $j$  is determined by the value of $i - j$, so that for example, $a_{11} = a_{22} = a_{33}$ and $a_{21} = a_{32} = a_{43}$. To construct a random $m \times N$ Toeplitz matrix $A$, only $N + m  -2$ random numbers are needed.  Haupt, Bajwa, Raz, Wright and Nowak \cite{BajwaNowak2007} showed that the matrix $A$ satisfies the RIP of order $3s$ with high probability for every $\delta \in (0, \frac{1}{3})$, provided that $m > C \cdot s^3 \log(\frac{N}{s})$, where $C$ is a constant. \\

There are many situations in signal processing where we encounter signals that are band-limited and are sparse in the frequency domain. 
 The random demodulator matrix is suitable in this setting \cite{TroppRombergBaraniuk2010}.
 For motivation, imagine that we try to acquire a single high-frequency tone that lies within a wide spectral band.  Then, a low-rate sampler with an antialiasing filter will be oblivious to any tone whose frequency exceeds the passband of the filter.  To deal with this problem, the random demodulator smears the tone across the entire spectrum so that it leaves a signature that a low-rate sampler can detect.   Consider a signal whose highest frequency does not exceed $\frac{W}{2}$ hertz.  We can give a mathematical description of the system. Let $D$ be a $W \times W$ diagonal matrix, with random numbers along the diagonal.  Next, we consider the action of the sampler and suppose the sampling rate is $R$, where $R$ divides $W$.  Each sample is then the sum of $\frac{W}{R}$ consecutive entries of the demodulated signal.  The action of the sampling is specified by a matrix $G$ with $R$ rows and $W$ columns, such that the $r$-th row has $\frac{W}{R}$ consecutive ones, beginning in column $(\frac{rW}{R}) + 1$ for each $r = 0, 1, 2, \ldots, R-1$.   For example, when $W = 12$ and $R = 3$, we have
        \[ G = 
    \left[ \begin{array}{cccccccccccc}
             1 & 1 & 1 & 1 & 0 & 0 & 0 & 0 & 0 & 0 & 0 & 0 \\
             0 & 0  & 0 & 0 & 1 & 1 & 1 & 1 & 0 & 0 & 0 & 0\\
             0 & 0  & 0 & 0 & 0 & 0 & 0 & 0 & 1 & 1 & 1 & 1 
             \end{array}  \right].
                   \]
Define  the $R \times W$ matrix $A = GDF$, where $F$ is the $W \times W$ discrete Fourier transform (DFT) matrix with the columns permuted; see \cite{TroppRombergBaraniuk2010} for further detail. For a fixed $\delta > 0$, if the sampling rate $R$ is greater than or equal to $C \delta^{-2} \cdot s \log(W)^6$, then an $R \times W$ random demodulation matrix $A$ has the RIP of order $s$ with constant $\delta_s \leq \delta$, with probability at least $1 - O(\frac{1}{W})$. \\

In contrast to the classes of matrices described above, the class of structured random matrices introduced in Sparse City has a block form.
The matrix $A \in \mathbb{R}^{m \times nb}$ is constructed by concatenating $b$ blocks.  More precisely,
 \[
A = \left[ \begin{array}{ccccc}
               D_1 W_{n}^{m} \ | & D_2 W_{n}^{m} \ | & D_3 W_{n}^{m} \ | & D_4 W_{n}^{m} \ | \ \ldots \ldots  \ | & D_{b} W_{n}^{m} \end{array} \right].
   \]
  In each block of $D_{j} W_{n}^{m}$, the same $m \times n$ matrix $W_n^{m}$ is used, but each block has its own random diagonal matrix $D_j$.
  For compressed sensing to be useful in applications, we need to have suitable hardware and a data acquisition system.  In seismic imaging, the signals  are often measured by multiple sensors.  
  A signal can be viewed as partitioned into many parts.   Different sensors  are responsible for measuring different parts of the signal. Each sensor is equipped with its own scrambler which it uses to randomly scramble the measurements. The block structure of the sensing matrix $A$ facilitates the design  of a suitable data acquisition scheme tailored to this setting.

\newpage
\section{Mathematical tools}
We collect together the tools we need to prove the main results.  We begin with a fundamental result by Rudelson and Vershynin \cite{RudelsonVershynin2008}, followed by an extension of this result, and then a concentration inequality.  In what follows, 
for vectors $x, y \in \mathbb{R}^n$, the tensor $x \otimes y$ is the rank-one operator defined by $(x \otimes y)(z) = \langle x, z \rangle y.$  
For a given subset $\Gamma \subseteq \{1, 2, \ldots, n\}$, the notation $x^{\Gamma}$ is the restriction of the vector $x$ on the coordinates in the set $\Gamma$.
\begin{lem}\label{lem:RudelsonVershynin}(Rudelson and Vershynin)

Let $x_1, x_2, x_3, \ldots, x_{m}$, with $m \leq n$, be vectors in $\mathbb{R}^n$ with uniformly bounded entries, $\| x_{i} \|_{\infty} \leq K$ for all $i$.  Then
\begin{equation}\label{eqn:RVlemma}
E \sup_{| \Gamma | \leq s} \left\|    \sum_{i=1}^m \epsilon_{i} \ x_{i}^{\Gamma} \otimes   x_{i}^{\Gamma}   \right\|  \leq M \cdot  \sup_{| \Gamma | \leq s}  \left\|    \sum_{i=1}^m  x_{i}^{\Gamma} \otimes   x_{i}^{\Gamma}   \right\|^{1/2}
\end{equation}
where the constant $M$  equals $C_1(K) \sqrt{s} \log(s) \sqrt{\log n} \sqrt{\log m}$.
\end{lem}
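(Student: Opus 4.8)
The plan is to prove \eqref{eqn:RVlemma} as a chaining bound for the Rademacher process obtained after linearizing the operator norm; since the signs $\epsilon_i$ are already in place, no symmetrization step is needed. Because $\sum_{i=1}^m\epsilon_i\,x_i^\Gamma\otimes x_i^\Gamma$ is self-adjoint, its operator norm equals $\sup_{u\in B_\Gamma}\bigl|\sum_{i=1}^m\epsilon_i\langle x_i^\Gamma,u\rangle^2\bigr|$. Writing $T$ for the set of $s$-sparse unit vectors in $\mathbb{R}^n$ and $R:=\sup_{|\Gamma|\le s}\|\sum_{i=1}^m x_i^\Gamma\otimes x_i^\Gamma\|^{1/2}$, the left-hand side of \eqref{eqn:RVlemma} becomes $E\sup_{u\in T}|Z_u|$ with $Z_u:=\sum_{i=1}^m\epsilon_i\langle x_i,u\rangle^2$, so it suffices to prove $E\sup_{u\in T}|Z_u|\le C_1(K)\,\sqrt{s}\,\log(s)\sqrt{\log n}\,\sqrt{\log m}\cdot R$.

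The process $\{Z_u\}_{u\in T}$ has sub-Gaussian increments for the pseudometric $d(u,v)^2=\sum_{i=1}^m\bigl(\langle x_i,u\rangle^2-\langle x_i,v\rangle^2\bigr)^2$, so Dudley's inequality gives $E\sup_{u\in T}|Z_u|\le C\int_0^{\infty}\sqrt{\log N(T,d,t)}\,dt$. The next step is to control $d$ through the factorization $\langle x_i,u\rangle^2-\langle x_i,v\rangle^2=\langle x_i,u-v\rangle\langle x_i,u+v\rangle$, which gives $d(u,v)\le\bigl(\sup_{w\in T_2}\max_i|\langle x_i,w\rangle|\bigr)\bigl(\sum_{i=1}^m\langle x_i,u-v\rangle^2\bigr)^{1/2}$, where $T_2$ is the set of $2s$-sparse unit vectors. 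The second factor is at most $\sqrt{2}\,R\,\|u-v\|_2$ after the routine reduction of the $2s$-sparse quadratic form to the $s$-sparse one (split the support into two halves and use the triangle inequality), and the first factor is at most $\sqrt{2}\,R$ and also at most $\sqrt{2s}\,K$, since $\max_i\langle x_i,w\rangle^2\le\sum_i\langle x_i,w\rangle^2$ while separately $\|w\|_1\le\sqrt{2s}$ and $\|x_i\|_\infty\le K$.

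With these bounds, the entropy integral is estimated from two covering-number inputs used on complementary ranges of $t$. At the coarse scales one uses the volumetric estimate for $s$-sparse unit vectors, $N(T,\|\cdot\|_2,\tau)\le\binom{n}{s}(1+2/\tau)^s$, together with $d(u,v)\le\sqrt2\,R\,\|u-v\|_2$, giving $\log N(T,d,t)\lesssim s\log(en/s)+s\log(R/t)$ and a contribution of order $\sqrt{s\log n}$ times the length of that range. At the fine scales the volumetric bound is too weak, and one invokes Rudelson's lemma (equivalently, the noncommutative Khintchine inequality in $L_p$ with $p\asymp\log m$, or the empirical method of Maurey), which exploits $\max_i\|x_i^\Gamma\|_2\le\sqrt{s}\,K$ and brings in the factor $\sqrt{\log m}$. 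Decomposing Dudley's integral dyadically, the number of scales that actually matter is $O(\log s)$ — the ratio between the $d$-diameter of $T$ and the scale below which the integrand is negligible is polynomial in $s$ — and at each scale the increment is controlled by the product of the sparse-support entropy $\sqrt{s\log n}$ and the Rudelson factor $\sqrt{\log m}$; summing over the $O(\log s)$ scales yields $\sqrt{s}\,\log(s)\sqrt{\log n}\,\sqrt{\log m}\cdot R$, with all $K$-dependence absorbed into $C_1(K)$.

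The step I expect to be the real obstacle is the fine-scale entropy/chaining estimate: one must bound the relevant covering numbers — or run the chaining — \emph{uniformly over the $\binom{n}{s}$ choices of support} while paying only a single power of $\log m$, which rules out a crude union bound over supports and forces the interleaving of the Rudelson/Khintchine estimate with the geometric chaining over $T$ (this is precisely the content of the Rudelson--Vershynin argument). Secondary technical points are the $2s$-to-$s$ reduction, so that the right-hand side of \eqref{eqn:RVlemma} genuinely features $\sup_{|\Gamma|\le s}$ rather than $\sup_{|\Gamma|\le 2s}$, and the bookkeeping that keeps $K$ out of the exponents of $s$, $m$, and $n$.
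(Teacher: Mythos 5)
Your outline matches the paper's sketch in its overall architecture — reduce the operator norm to a scalar process via self-adjointness, apply Dudley's entropy bound, and estimate the entropy integral with two covering estimates on complementary ranges of scales (the paper itself only sketches this lemma, deferring the final integral estimate to Rudelson--Vershynin). But there is a concrete error at the step where you dominate the pseudometric: you apply H\"older to $d(u,v)^2=\sum_i\langle x_i,u-v\rangle^2\langle x_i,u+v\rangle^2$ with the $\max_i$ placed on the \emph{sum} $u+v$ and the $\ell_2$-sum placed on the \emph{difference} $u-v$. The Rudelson--Vershynin argument (and the paper's review of it) does the opposite: $d(u,v)\le 2R\,\max_i|\langle x_i,u-v\rangle| = 2R\,\|u-v\|_X$. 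This is not a cosmetic choice. With the split done your way, the first factor collapses to a constant $\min(\sqrt{2}R,\sqrt{2s}K)$ and the chaining metric becomes (a multiple of) the Euclidean metric, so Dudley's integral sees only the volumetric entropy of the $s$-sparse sphere; that yields a bound of order $\min(R,\sqrt{s}K)\cdot R\sqrt{s\log n}$, which either carries $R^2$ (destroying the self-bounding step $E\le M\sqrt{E+1}$ used downstream) or loses a factor of essentially $\sqrt{s}$ against the claimed $M\cdot R$, and in neither case produces $\sqrt{\log m}$.

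The factor $\sqrt{\log m}$ has exactly one source in this argument: the covering numbers of $\sqrt{s}B_1^n\supseteq\bigcup_{|\Gamma|\le s}B_\Gamma$ with respect to the norm $\|z\|_X=\max_{i\le m}|\langle x_i,z\rangle|$, estimated by Maurey's empirical method as $\log N\lesssim K^2 s\,u^{-2}\log n\log m$ (the $\log m$ is the logarithm of the number of functionals in the max). For that estimate to enter Dudley's integral at all, $\|\cdot\|_X$ must be the chaining metric, i.e.\ the $\max_i$ must sit on $u-v$. Your ``invoke Rudelson's lemma at the fine scales'' cannot be grafted onto a Euclidean-metric Dudley integral — it is not a covering bound for that metric — and, as you yourself note, a union bound over the $\binom{n}{s}$ supports is not an admissible substitute. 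The fix is simply to reverse the H\"older split, bound the sum factor by $\bigl(\sum_i\langle x_i,u+v\rangle^2\bigr)^{1/2}\le 2\sqrt{2}R$ (your $2s$-to-$s$ reduction is fine here), and then run the two-regime entropy estimate for $N\bigl(\bigcup_{|\Gamma|\le s}B_\Gamma,\|\cdot\|_X,u\bigr)$: the empirical-method bound at coarse scales and the volumetric bound (via $\|z\|_X\le K\sqrt{s}\|z\|_2$) at fine scales, whose crossover produces the $\log s$. The rest of your outline, including the reduction $\|\sum_i\epsilon_i x_i^\Gamma\otimes x_i^\Gamma\|=\sup_{u\in B_\Gamma}|\sum_i\epsilon_i\langle x_i,u\rangle^2|$ and the use of subgaussian increments in place of the paper's Gaussian comparison, is sound.
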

Since our next lemma is an extension of this lemma, we provide a review of the main ideas in the proof of Lemma (\ref{lem:RudelsonVershynin}).\\

Let $E_1$ denote the left-hand side of (\ref{eqn:RVlemma}).  
We will bound $E_1$ by the supremum of a Gaussian process.  
Let $g_1, g_2, g_3, \ldots, g_m$ be independent standard normal random variables. The expected value of $\left| g_i \right|$ is a constant that does not depend on the index $i$.
\begin{align*}
E_1 & \leq C_3 \cdot  E \sup_{| \Gamma | \leq s} \left\|    \sum_{i=1}^m E \left| g_{i} \right| \ \epsilon_i \ x_{i}^{\Gamma} \otimes   x_{i}^{\Gamma}   \right\|  \\
& \leq C_3 \cdot  E \sup_{| \Gamma | \leq s} \left\|    \sum_{i=1}^m  \left| g_{i} \right|  \ x_{i}^{\Gamma} \otimes   x_{i}^{\Gamma}   \right\| \\
&  = C_3 \cdot E \sup \left\{\left|   \sum_{i=1}^{m} g_{i} \langle   x_i, x \rangle^{2}     \right| \colon | \Gamma | \leq s, x \in B_{\Gamma}  \right\} 
\end{align*}

To see that the last equality is true, consider an operator $A$ on $\mathbb{R}^n$ defined by \[ A z  = \sum_{i=1}^m g_i \langle x_i , z \rangle x_i \]  and since $A$ is a self-adjoint operator,  it follows that \[ \| A \|_{op} = \sup_{\|z\| = 1} \langle Az, z\rangle =\sup_{ \| z \| = 1} \sum_{i=1}^m g_i \langle x_i,z \rangle^2. \]

For each vector $u$ in $\mathbb{R}^n$, we consider the Gaussian process
\[G(u) = \sum_{i=1}^m g_i \langle x_i, u \rangle^2.\] 
This Gaussian process is a random process indexed by vectors in $\mathbb{R}^n$. \\

Thus to obtain an upper bound on $E_1$, we need an estimate on the expected value of the supremum of a Gaussian process over an arbitrary index set.  We use Dudley's Theorem (see \cite{Talagrand1996}, Proposition 2.1) to obtain an upper bound. 
\begin{thm}
Let $\left(X(t): t \in T \right)$ be a Gaussian process with the associated pseudo-metric $d(s,t) = \left(  E \left | X(s) - X(t) \right |^2 \right)^{1/2}$.  Then there exists a constant $K > 0$ such that
\[ E \sup_{t \in T} X(t) \leq K \int_{0}^{\infty} \sqrt{\log N (T, d, u)  } \ du.\]
Here, $T$ is an arbitrary index set, and the covering number $N(T, d, u)$ is the smallest number of balls of radius $u$ to cover the set $T$ with respect to the pseudo-metric.

\end{thm}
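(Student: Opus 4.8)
The plan is to prove this by the classical \emph{chaining} argument. First I would reduce to the case where $T$ is finite — replacing $\sup_{t\in T}$ by $\sup$ over finite subsets $F\subseteq T$ and passing to the limit at the very end — and fix a base point $t_0\in T$, using that $X$ is centered so that $E\sup_{t\in T}X(t)=E\sup_{t\in T}(X(t)-X(t_0))$. Let $\Delta$ be the diameter of $T$ in the pseudo-metric $d$. For each integer $k\ge 0$ I would pick a minimal $2^{-k}\Delta$-net $T_k\subseteq T$, so that $|T_k|=N(T,d,2^{-k}\Delta)$ and $T_0=\{t_0\}$, and for each $t\in T$ let $\pi_k(t)$ be a nearest point of $T_k$ to $t$, whence $d(t,\pi_k(t))\le 2^{-k}\Delta$ and (for finite $T$) $\pi_k(t)=t$ for all large $k$. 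This gives the telescoping identity
\[ X(t)-X(t_0)=\sum_{k\ge 1}\bigl(X(\pi_k(t))-X(\pi_{k-1}(t))\bigr), \]
hence $\sup_{t\in T}(X(t)-X(t_0))\le\sum_{k\ge 1}\max_{t\in T}\bigl(X(\pi_k(t))-X(\pi_{k-1}(t))\bigr)$, where for each fixed $k$ the maximum is over at most $|T_k|\cdot|T_{k-1}|\le|T_k|^2$ distinct Gaussian variables.

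The key input at each link of the chain is the elementary Gaussian maximal inequality: if $Z_1,\dots,Z_M$ are mean-zero Gaussians (not necessarily independent) with $EZ_i^2\le\sigma^2$, then $E\max_i Z_i\le\sigma\sqrt{2\log M}$, which follows from $\exp(\lambda E\max_i Z_i)\le E\exp(\lambda\max_i Z_i)\le M e^{\lambda^2\sigma^2/2}$ and optimizing over $\lambda>0$. By the triangle inequality $d(\pi_k(t),\pi_{k-1}(t))\le 2^{-k}\Delta+2^{-(k-1)}\Delta\le 3\cdot 2^{-k}\Delta$, so the variables in the $k$-th link have standard deviation at most $3\cdot 2^{-k}\Delta$; taking expectations in the previous display and applying the maximal inequality yields
\[ E\sup_{t\in T}(X(t)-X(t_0))\le\sum_{k\ge 1}3\cdot 2^{-k}\Delta\,\sqrt{2\log\bigl(N(T,d,2^{-k}\Delta)^2\bigr)}=6\sum_{k\ge 1}2^{-k}\Delta\,\sqrt{\log N(T,d,2^{-k}\Delta)}. \]

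It then remains to compare this dyadic sum with the entropy integral. Since $u\mapsto N(T,d,u)$ is nonincreasing, for $u\in[2^{-k-1}\Delta,2^{-k}\Delta]$ one has $\log N(T,d,u)\ge\log N(T,d,2^{-k}\Delta)$, and integrating over that interval of length $2^{-k-1}\Delta$ gives $2^{-k}\Delta\sqrt{\log N(T,d,2^{-k}\Delta)}\le 2\int_{2^{-k-1}\Delta}^{2^{-k}\Delta}\sqrt{\log N(T,d,u)}\,du$. Summing over $k\ge 1$, these intervals tile $(0,\Delta/2]$ and the integrand vanishes for $u>\Delta$, so the sum is at most $2\int_0^\infty\sqrt{\log N(T,d,u)}\,du$. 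Combining the estimates,
\[ E\sup_{t\in T}X(t)\le 12\int_0^\infty\sqrt{\log N(T,d,u)}\,du, \]
so $K=12$ is admissible (the constant is not optimized, and a more careful chaining improves it).

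The step I expect to be the only genuine subtlety is the very first one: for a general, possibly uncountable index set $T$ the quantity $E\sup_{t\in T}X(t)$ and the chaining series need not be measurable or convergent without a separability hypothesis on the process. The standard remedy, which is what I would use, is to establish the bound with a universal constant uniformly over all finite $F\subseteq T$ — where every supremum is a finite maximum and the telescoping series terminates — and then take the supremum over such $F$; under the usual separability assumption this recovers the stated inequality for $T$ itself. The remaining ingredients, namely the union-bound Gaussian maximal inequality and the sum-to-integral comparison, are routine.
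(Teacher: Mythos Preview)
Your chaining argument is the standard and correct proof of Dudley's entropy bound: the reduction to finite index sets, the dyadic nets, the telescoping decomposition, the Gaussian maximal inequality $E\max_i Z_i\le\sigma\sqrt{2\log M}$, and the sum-to-integral comparison are all sound, and the constant $K=12$ you obtain is acceptable (and easily improved, as you note).

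There is, however, nothing to compare against: the paper does not prove this theorem. It is quoted as a black-box tool with a citation to Talagrand (Proposition~2.1 of the cited reference), and is used only to pass from the supremum of the Gaussian process $G$ to a covering-number integral. So your proposal supplies a complete argument where the paper simply invokes the literature. One small remark: you tacitly assume the process is centered when writing $E\sup_t X(t)=E\sup_t(X(t)-X(t_0))$; the paper's statement does not say this explicitly, but it is the standard hypothesis under which Dudley's inequality is formulated, and in the paper's application the process $G(u)=\sum_i g_i\langle x_i,u\rangle^2$ is indeed centered.
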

By applying Dudley's inequality with
\[ T = \bigcup_{\left| \Gamma \right| \leq s} B_{\Gamma} \]
  the above calculations show that, 
\begin{equation}\label{eqn:coveringNumber}
E_1 \leq C_4 \int_{0}^{\infty} \left[  \log N \left( \bigcup_{ | \Gamma | \leq s} B_{\Gamma}, \| \cdot \|_{G}, u \right) \right]^{1/2} \ du,
\end{equation}
where $N$ is the covering number. \\

There is a semi-norm associated with the Gaussian process, so that if $x$ and $y$ are any two fixed vectors in $\mathbb{R}^n$, then
\begin{eqnarray*}
\| x - y\|_G & = & \left( E \left|  G(x) - G(y) \right|^2  \right)^{1/2}   \\
     & = & \left[ \sum_{i=1}^m \left( \langle x_i, x \rangle^2   - \langle x_i, y  \rangle^2  \right )^2 \right]^{1/2} \\
     &\leq & \left[ \sum_{i=1}^m \left( \langle x_i, x \rangle + \langle x_i, y \rangle \right)^2 \right]^{1/2} \cdot \max_{i \leq m} \left |   \langle x_i, x - y  \rangle \right | \\
     & \leq & 2 \max_{ | \Gamma | \leq s, z \in \mathbb{B}_{2}^{ \Gamma } } \left[   \sum_{i=1}^m \langle    x_i, z \rangle^2    \right]^{1/2} \cdot \max_{i \leq m} \left | \langle  x_i, x - y \rangle  \right | =  2 \ R \ \max_{i \leq m} \left | \langle  x_i, x- y \rangle \right |,
\end{eqnarray*}
where
\[R \equiv \sup_{| \Gamma | \leq s}  \left\|    \sum_{i=1}^m  x_{i}^{\Gamma} \otimes   x_{i}^{\Gamma}   \right\|^{1/2}  .\]



Thus, by a change of variable in the integral in (\ref{eqn:coveringNumber}), we see that
\begin{equation}\label{eqn:Dudley_inequality2}
E_3 \leq C_5 R \sqrt{s} \int_{0}^{\infty} \log^{1/2} N \left( \frac{1}{\sqrt{s}} \bigcup_{|T| \leq s} B_{\Gamma}, \| \cdot \|_X, u \right) \ du.
\end{equation}
Here, the semi-norm $\| x \|_X$ is defined by
 \[ \| x \|_X = \max_{i \leq m} \left | \langle x_i, x \rangle \right |. \]
 
It is sufficient to show  the integral in (\ref{eqn:Dudley_inequality2}) is bounded by $C_{11}(K)  \cdot \log(s) \cdot \sqrt{\log n} \cdot \sqrt{\log m}.$ \\

This concludes our review of the main ideas in the proof of Lemma (\ref{lem:RudelsonVershynin}). \\

We extend the fundamental lemma of Rudelson and Vershynin.  The proof follows the strategy of the proof of the original lemma, with an additional ingredient.
The Gaussian process involved is replaced by a tensorized version, with the appropriate tensor norm.
\begin{lem}\label{lemma:Extension}(Extension of the fundamental lemma of Rudelson and Vershynin)

Let $u_1, u_2, u_3, \ldots, u_{k}$, and $v_1, v_2, v_3, \ldots, v_{k}$, with $k \leq n$, be vectors in $\mathbb{R}^n$ with uniformly bounded entries, $\| u_{i} \|_{\infty} \leq K$ and $\| v_{i} \|_{\infty} \leq K$ for all $i$.  Then  

\begin{equation}\label{eqn:ExtendedRVlemma}
E \sup_{| \Gamma | \leq s} \left\|    \sum_{i=1}^k \epsilon_{i} \ u_{i}^{\Gamma} \otimes   v_{i}^{\Gamma}   \right\|  \leq M \cdot \left( \sup_{| \Gamma | \leq s}  \left\|    \sum_{i=1}^k  u_{i}^{\Gamma} \otimes   u_{i}^{\Gamma}   \right\|^{1/2} + \sup_{| \Gamma | \leq s}  \left\|    \sum_{i=1}^k  v_{i}^{\Gamma} \otimes   v_{i}^{\Gamma}   \right\|^{1/2} \right)
\end{equation}
where the constant $M$ depends on $K$ and the sparsity $s$.
\end{lem}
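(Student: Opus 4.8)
The plan is to mimic the proof of Lemma \ref{lem:RudelsonVershynin} almost verbatim, replacing the symmetric rank-one operators $x_i^\Gamma \otimes x_i^\Gamma$ by the mixed operators $u_i^\Gamma \otimes v_i^\Gamma$, and tracking where the lack of symmetry forces a modification. First I would symmetrize: since the $\epsilon_i$ are Rademacher, $E\sup_{|\Gamma|\le s}\|\sum_i \epsilon_i u_i^\Gamma\otimes v_i^\Gamma\|$ is comparable, by the standard comparison with Gaussians (introducing $E|g_i|$ as a constant factor and using Jensen to pull it out), to $E\sup_{|\Gamma|\le s}\|\sum_i g_i\, u_i^\Gamma\otimes v_i^\Gamma\|$ with $g_i$ i.i.d.\ standard normal. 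The new point is that $\sum_i g_i\, u_i\otimes v_i$ is \emph{not} self-adjoint, so its operator norm is not a supremum of a single quadratic form; instead one writes $\|\sum_i g_i\, u_i^\Gamma\otimes v_i^\Gamma\| = \sup\{|\sum_i g_i\langle u_i,x\rangle\langle v_i,y\rangle| : x\in B_\Gamma,\, y\in B_\Gamma,\,|\Gamma|\le s\}$. So I would index a Gaussian process by \emph{pairs} $(x,y)$ in $T\times T$ with $T=\bigcup_{|\Gamma|\le s}B_\Gamma$, namely $G(x,y)=\sum_i g_i\langle u_i,x\rangle\langle v_i,y\rangle$, and apply Dudley's theorem to this process on the product index set.

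The core estimate is then the increment bound for the associated pseudo-metric. Writing $G(x,y)-G(x',y')$ and using the elementary splitting
\[
\langle u_i,x\rangle\langle v_i,y\rangle - \langle u_i,x'\rangle\langle v_i,y'\rangle = \langle u_i,x-x'\rangle\langle v_i,y\rangle + \langle u_i,x'\rangle\langle v_i,y-y'\rangle,
\]
I would bound
\[
\|(x,y)-(x',y')\|_G \le \Big(\sum_i\langle v_i,y\rangle^2\Big)^{1/2}\max_i|\langle u_i,x-x'\rangle| + \Big(\sum_i\langle u_i,x'\rangle^2\Big)^{1/2}\max_i|\langle v_i,y-y'\rangle|,
\]
which is exactly the place where \emph{both} radii
\[
R_u\equiv\sup_{|\Gamma|\le s}\Big\|\sum_i u_i^\Gamma\otimes u_i^\Gamma\Big\|^{1/2},\qquad R_v\equiv\sup_{|\Gamma|\le s}\Big\|\sum_i v_i^\Gamma\otimes v_i^\Gamma\Big\|^{1/2}
\]
enter, and is the reason the right-hand side of \eqref{eqn:ExtendedRVlemma} is a \emph{sum} $R_u+R_v$ rather than a single term. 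The pseudo-metric is thus controlled by $R_v\|x-x'\|_X + R_u\|y-y'\|_X$ where $\|z\|_X=\max_{i\le k}|\langle u_i,z\rangle|$ and, for the $v$-part, the analogous seminorm built from the $v_i$; covering $T\times T$ in this metric costs at most the product (equivalently, the sum of logs) of covering numbers of $T$ in each seminorm separately, so Dudley's integral splits into two integrals each of the same shape as \eqref{eqn:Dudley_inequality2}. Invoking the combinatorial covering-number estimate already used in the review of Lemma \ref{lem:RudelsonVershynin} — namely that $\int_0^\infty \log^{1/2}N(\tfrac1{\sqrt s}T,\|\cdot\|_X,u)\,du$ is bounded by $C(K)\log(s)\sqrt{\log n}\sqrt{\log k}$ — applied once with the $u_i$ and once with the $v_i$, gives the bound with $M = C(K)\sqrt{s}\,\log(s)\sqrt{\log n}\sqrt{\log k}$ multiplying $(R_u+R_v)$.

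The main obstacle, and the only genuinely new technical point, is handling the product index set cleanly: one must verify that covering $T\times T$ with respect to the sum-metric $d\big((x,y),(x',y')\big)\lesssim R_v d_1(x,x')+R_u d_2(y,y')$ is controlled by the product of the individual covering numbers, and that the resulting $\log^{1/2}$ of a product splits (up to a $\sqrt2$) into a sum of two $\log^{1/2}$ terms so Dudley's integral genuinely decouples; here one also has to be slightly careful that the "radius" factors $R_u,R_v$ appearing inside the metric are the deterministic suprema — finite because the entries are bounded by $K$ — so that the change of variables $u\mapsto u/(R_u\sqrt s)$ etc.\ is legitimate. A secondary point worth a sentence is that $\sum_i\langle v_i,y\rangle^2 \le R_v^2$ and $\sum_i\langle u_i,x'\rangle^2\le R_u^2$ uniformly over the index set, which is what makes the increment bound uniform. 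Everything else — the Rademacher-to-Gaussian comparison, Dudley's theorem, and the entropy estimate — is quoted directly from the proof of Lemma \ref{lem:RudelsonVershynin} reproduced above, so the extension is essentially a bookkeeping argument once the product-space Dudley step is set up.
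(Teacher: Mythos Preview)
Your proposal is correct and follows essentially the same route as the paper: pass from Rademacher to Gaussian, apply Dudley's inequality to the process $G(x,y)=\sum_i g_i\langle u_i,x\rangle\langle v_i,y\rangle$ on the product index set $T\times T$ with $T=\bigcup_{|\Gamma|\le s}B_\Gamma$, bound the increment pseudo-metric by $(R_u+R_v)$ times a max-type seminorm, and then invoke the covering-number estimate from the original Rudelson--Vershynin lemma twice. The only cosmetic difference is in the algebraic splitting of the bilinear increment --- you use the telescoping identity $ab-cd=(a-c)b+c(b-d)$, whereas the paper uses the symmetric polarization $ab-cd=\tfrac12[(a+c)(b-d)+(a-c)(b+d)]$ --- but both lead to the same product-metric bound and the remainder of the argument is identical.
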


\begin{proof}
Let $E_1$ denote the left-hand side of (\ref{eqn:ExtendedRVlemma}).   Our plan is to bound $E_1$ by the supremum of a Gaussian process.
Let $g_1, g_2, g_3, \ldots, g_k$ be independent standard normal random variables.  Then
\begin{align*}
E_1 & = E \sup \left\{\left|   \sum_{i=1}^{k} \epsilon_{i} \langle   x_p, u_i \rangle  \langle  v_i, x_q  \rangle      \right| \colon | \Gamma | \leq s, x_p \in B_{\Gamma}, x_q \in B_{\Gamma}  \right\} \\
& \leq C_3 \cdot \mathbb{E} \sup \left\{\left|   \sum_{i=1}^{k} g_{i} \langle   x_p, u_i \rangle  \langle  v_i, x_q  \rangle      \right| \colon | \Gamma | \leq s, x_p \in B_{\Gamma}, x_q \in B_{\Gamma}  \right\} 
\end{align*}
When $G(x)$ is a Gaussian process indexed by the elements $x$ in an arbitrary index set $T$, Dudley's inequality states that
\[ E \sup_{x \in T} \left|  G(x)    \right| \leq C \cdot \int_{0}^{\infty} \log^{1/2} N(T, d, u) \ du, \]
 with the pseudo-metric $d$ given by
\[ d(x,y) = \left(  E \left|  G(x) - G(y)       \right|^{2}  \right)^{1/2}.   \]
Our Gaussian process is indexed by two vectors $x_p$ and $x_q$ so that 
\[ G(x_p, x_q) = \sum_{i} g_{i} \langle   x_p, u_i \rangle  \langle  v_i, x_q  \rangle   \]
and the index set is \[T = \bigcup_{|   \Gamma |  \leq s} B_{\Gamma} \otimes    B_{\Gamma}.   \] The pseudo-metric on $T$ is given by
\begin{align*}
& d\left(   ( x_p, x_q), (y_p, y_q)    \right) \\
& = \left[   \sum_{i=1}^{k} \left(  \langle x_p, u_i \rangle \langle v_i, x_q \rangle - \langle y_p, u_i \rangle \langle v_i, y_q \rangle        \right)^2       \right]^{1/2} \\
& = \frac{1}{2} \left[  \sum_{i=1}^k \left(   \langle x_p + y_p, u_i \rangle \langle v_i, x_q - y_q \rangle + \langle x_p - y_p, u_i \rangle \langle v_i, x_q + y_q \rangle     \right)       \right]^{1/2} \\
& \leq \frac{1}{2} \cdot \max_{i} \left(   \left| \langle u_i, x_p - y_p \rangle   \right|,     \left| \langle v_i, x_q - y_q   \rangle \right|      \right) \cdot \left[  \sum_{i=1}^{k} \left(    \left| \langle x_p + y_p , u_i \rangle   \right| + \left| \langle x_q + y_q , v_i \rangle   \right|     \right)^{2}    \right]^{1/2} \\
& \leq Q \cdot \max_{i} \left(    \left| \langle u_i, x_p - y_p \rangle   \right|,     \left| \langle v_i, x_q - y_q   \rangle \right|      \right),
\end{align*}
where  the quantity $Q$ is defined by
\[
Q  = \frac{1}{2} \sup \left\{ \left[ \sum_{i=1}^{k} \left(  \left|  \langle x_p + y_p, u_i \rangle  \right| +  \left|  \langle x_q + y_q, v_i \rangle  \right|  \right)^{2} \right]^{1/2} \colon (x_p, x_q) \in \Gamma    \right\}
\]
We bound the quantity $Q$ in the following calculations.
\begin{align*}
& Q^2  = \frac{1}{4} \sup_{ (x_p, x_q) \in \Gamma}  \sum_{i=1}^{k} \left(  \left|  \langle x_p + y_p, u_i \rangle  \right| +  \left|  \langle x_q + y_q, v_i \rangle  \right|      \right)^{2}   \\
\leq &\frac{1}{4} \sup_{(x_p, x_q) \in \Gamma} \left\{    \sum_{i=1}^{k} \left| \langle x_p + y_p, u_i  \rangle \right|^2 +   \sum_{i=1}^{k} \left| \langle x_p + y_p, v_i   \rangle \right|^2  + 2 \sum_{i=1}^{k}  \left|    \langle x_p + y_p, u_i  \rangle    \right| \cdot  \left|    \langle x_q + y_q, v_i  \rangle    \right|  \right\} \\
 \leq & \left\| \sum_{i =1}^{k} u_{i} \otimes u_{i} \right\|_{\Gamma}   +    \left\| \sum_{i =1}^{k} v_{i} \otimes v_{i} \right\|_{\Gamma}        + \\
        & \hspace{1in} \frac{1}{2} \left[\sup_{(x_p, x_q) \in \Gamma} \left(   \sum_{i=1}^{k} \left| \langle x_p + y_p, u_i  \rangle \right|^2       \right)^{1/2} +  \left(   \sum_{i=1}^{k} \left| \langle x_q + y_q, v_i  \rangle \right|^2       \right)^{1/2} \right] \\
 \leq &  \left\| \sum_{i =1}^{k} u_{i} \otimes u_{i} \right\|_{\Gamma}   +    \left\| \sum_{i =1}^{k} v_{i} \otimes v_{i} \right\|_{\Gamma}   + 2  \left\| \sum_{i =1}^{k} u_{i} \otimes u_{i} \right\|_{\Gamma}^{1/2} \cdot  \left\| \sum_{i =1}^{k} u_{i} \otimes u_{i} \right\|_{\Gamma}^{1/2} \\
 = & \left[   \left\| \sum_{i =1}^{k} u_{i} \otimes u_{i} \right\|_{\Gamma}^{1/2} + \left\| \sum_{i =1}^{k} v_{i} \otimes v_{i} \right\|_{\Gamma}^{1/2}    \right]^{{\Large{2}}} \equiv S^2. \\
\end{align*}
We now define two norms.  Let $\| x \|_{\infty}^{(U)} = \max_{i} \left|   \langle x, u_i \rangle  \right|$ and $\| x \|_{\infty}^{(V)} = \max_{i} \left|   \langle x, v_i \rangle  \right|$. The above calculations show that the pseudo-metric satisfies the next inequality,
\[ d(   (x_p, x_q), (y_p, y_q) ) \leq S \cdot \max \left(   \| x_p - y_p \|_{\infty}^{(U)} \ ,   \| x_q - y_q \|_{\infty}^{(V)}   \right).  \]
Let $\widetilde{T} = \bigcup_{| \Gamma | \leq s} B_{\Gamma}.$  Then $T \subseteq \widetilde{T} \otimes \widetilde{T}$.  Moreover, the covering number of the set $T$ and the covering number of the set $\widetilde{T}$ must satisfy the relation
\[ N(T, d, u) \leq N( \widetilde{T} \otimes \widetilde{T}, \widetilde{d}, u). \]
Here, $d$ and $\widetilde{d}$ are the pseudo-metrics  for the corresponding index sets.
Consequently, we have
\begin{align*} 
& \int_{0}^{\infty} \log^{1/2} N(T, d, u) \ du \\
&\leq S \cdot \int_{0}^{\infty} \log^{1/2} N( \widetilde{T}, \| \cdot \|_{\infty}^{(U)}, u) \ du + S \cdot \int_{0}^{\infty} \log^{1/2} N( \widetilde{T}, \| \cdot \|_{\infty}^{(V)}, u) \ du. 
\end{align*}
We have completed all the necessary modification to the proof of the original lemma.  The rest of the proof proceeds in exactly the same manner as the proof of the original lemma, almost verbatim, and we omit the repetition.

\end{proof}


In order to show that, with high probability, a random quantity does not deviate too much from its mean, we invoke a concentration inequality for sums of independent symmetric random variables in a Banach space. (See \cite{TroppRombergBaraniuk2010}, Proposition 19, which follows from \cite{Talagrand1991}, Theorem 6.17).
\begin{prop}(Concentration Inequality)\label{Prop:ConcentrationIneq}

Let $Y_1, Y_2, \ldots, Y_R$ be independent, symmetric random variables in a Banach space X.  Assume that each random variable satisfies the bound $\| Y_j \|_{X} \leq B$ almost surely, for $1 \leq j \leq R$.  Let $Y = \| \sum_{j} Y_j\|_{X}.$ Then there exists a constant $C$ so that for all $u, t \geq 1$,
\[ P\left( Y > C [u E(Y) + t B] \right) \leq e^{-u^2} + e^{-t}.\]

\end{prop}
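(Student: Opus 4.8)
The plan is to derive Proposition \ref{Prop:ConcentrationIneq} from Talagrand's concentration inequality for sums of independent symmetric Banach-space-valued random variables; the statement is in fact recorded in \cite{TroppRombergBaraniuk2010} (Proposition 19) and ultimately rests on Theorem 6.17 of \cite{Talagrand1991}, so what is really needed is the reduction. The first step is to realize $Y = \bigl\| \sum_{j=1}^{R} Y_j \bigr\|_X$ as the supremum of an empirical process: writing $B_{X^{\ast}}$ for the closed unit ball of the dual space $X^{\ast}$, we have $Y = \sup_{\phi \in B_{X^{\ast}}} \sum_{j=1}^{R} \phi(Y_j)$, and for each such $\phi$ the scalars $\phi(Y_1), \ldots, \phi(Y_R)$ are independent, symmetric, and bounded by $|\phi(Y_j)| \le \|\phi\|\,\|Y_j\|_X \le B$ almost surely. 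Talagrand's inequality for this process delivers a bound of Bernstein type, $\mathbb{P}\bigl( Y \ge \mathbb{E}\,Y + \lambda \bigr) \le \exp\!\bigl( -c\,\lambda^2 / (\sigma^2 + B\lambda) \bigr)$, together with the companion moment estimate $\bigl( \mathbb{E}\,Y^{p} \bigr)^{1/p} \le C_0\bigl( \mathbb{E}\,Y + \sqrt{p}\,\sigma + pB \bigr)$ for all $p \ge 1$, in which $\sigma^2 := \sup_{\phi \in B_{X^{\ast}}} \sum_{j=1}^{R} \mathbb{E}\,\phi(Y_j)^2$ is the weak variance.

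The one step carrying real content is to eliminate $\sigma$ by proving $\sigma^2 \le C_1\, B\, \mathbb{E}\,Y$. For fixed $\phi$ one has $\phi(Y_j)^2 \le B\,|\phi(Y_j)|$ pointwise, hence $\sum_j \mathbb{E}\,\phi(Y_j)^2 \le B\,\mathbb{E} \sum_j |\phi(Y_j)|$ and therefore $\sigma^2 \le B\,\mathbb{E} \sup_{\phi} \sum_j |\phi(Y_j)|$. To control the remaining supremum by $\mathbb{E}\,Y$, symmetrize --- inserting Rademacher signs $\epsilon_j$ at the cost of a factor $2$, which is legitimate precisely because the $Y_j$ are symmetric --- and then apply the contraction principle to strip the absolute values, since $t \mapsto |t|$ is $1$-Lipschitz and vanishes at $0$; this gives $\mathbb{E} \sup_{\phi} \sum_j |\phi(Y_j)| \lesssim \mathbb{E} \sup_{\phi} \bigl| \sum_j \epsilon_j \phi(Y_j) \bigr| = \mathbb{E} \bigl\| \sum_j \epsilon_j Y_j \bigr\|_X = \mathbb{E}\,Y$, the last equality again using symmetry. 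Substituting $\sigma^2 \le C_1\,B\,\mathbb{E}\,Y$ back into Talagrand's bound leaves a deviation (and a moment) inequality expressed purely through $\mathbb{E}\,Y$ and $B$.

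What remains is to repackage $\mathbb{P}\bigl( Y \ge \mathbb{E}\,Y + \lambda \bigr) \le \exp\!\bigl( -c\,\lambda^2 / (C_1 B\,\mathbb{E}\,Y + B\lambda) \bigr)$ into the two-parameter form claimed. This is a standard Fuk--Nagaev-type splitting: one takes the deviation $\lambda$ comparable to the target threshold $u\,\mathbb{E}\,Y + tB$ and observes that the quadratic part of the exponent is in force in the regime governed by $u\,\mathbb{E}\,Y$ and yields the factor $e^{-u^2}$ (the hypothesis $u \ge 1$ lets one absorb the bare $\mathbb{E}\,Y$), while the linear-in-$\lambda$ part of the denominator governs the large-deviation regime and yields $e^{-t}$ (using $t \ge 1$); summing the two contributions and absorbing all universal constants into a single $C$ produces $\mathbb{P}\bigl( Y > C[ u\,\mathbb{E}\,Y + tB ] \bigr) \le e^{-u^2} + e^{-t}$. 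I expect the main obstacle to be the first two steps: pinning down the precise formulation of Talagrand's inequality so that its hypotheses (independence, symmetry, almost-sure boundedness) transfer verbatim to the process $\sum_j \phi(Y_j)$ uniformly over $\phi \in B_{X^{\ast}}$, and executing the symmetrization-and-contraction bound $\sigma^2 \le C_1 B\,\mathbb{E}\,Y$, which is where the symmetry of the $Y_j$ is genuinely used. The closing repackaging is elementary, and for the purposes of this paper one may alternatively just quote Proposition 19 of \cite{TroppRombergBaraniuk2010}.
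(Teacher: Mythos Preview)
The paper does not prove this proposition at all: it simply quotes it as a tool, with a parenthetical citation to Proposition~19 of \cite{TroppRombergBaraniuk2010} and to Theorem~6.17 of \cite{Talagrand1991}. So there is nothing to compare against---your proposal already supplies strictly more than the paper does, and your closing remark that one may simply cite \cite{TroppRombergBaraniuk2010} is exactly what the paper opts for.

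That said, one step in your sketch deserves a second look. In bounding the weak variance you write $\sigma^2 \le B\,\mathbb{E}\sup_\phi \sum_j |\phi(Y_j)|$ and then claim that symmetrization plus the contraction principle give $\mathbb{E}\sup_\phi \sum_j |\phi(Y_j)| \lesssim \mathbb{E}\sup_\phi \bigl|\sum_j \epsilon_j \phi(Y_j)\bigr|$. The contraction principle (Ledoux--Talagrand, Theorem~4.12) controls $\mathbb{E}\sup_\phi \bigl|\sum_j \epsilon_j\,|\phi(Y_j)|\bigr|$ by $\mathbb{E}\sup_\phi \bigl|\sum_j \epsilon_j \phi(Y_j)\bigr|$, but the quantity you need is $\mathbb{E}\sup_\phi \sum_j |\phi(Y_j)|$ with no Rademacher signs, which is larger; inserting $\epsilon_j$ inside absolute values does nothing, and the inequality $\sum_j |a_j| \ge \bigl|\sum_j \epsilon_j |a_j|\bigr|$ goes the wrong way. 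The clean fix is to bypass this altogether: since the $Y_j$ are independent and centred, $\sigma^2 = \sup_\phi \mathbb{E}\bigl(\sum_j \phi(Y_j)\bigr)^2 \le \mathbb{E}\,Y^2$, and then one compares moments of $Y$ directly via Ledoux--Talagrand Theorem~6.17 (this is precisely why the paper points to that theorem). The final repackaging into the $e^{-u^2}+e^{-t}$ form then proceeds as you describe.
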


\vspace{0.3in}

We define a sequence of vectors that depend on the entries in the matrix $W_{n}^{m}$.\\
Let $y_{kw} \in \mathbb{R}^{nb}$, where the entries  indexed by $(k-1)n + 1, (k-1)n + 2, (k-1)n + 3, \ldots, kn$ are from row $w$
of the matrix $W_{n}^{m}$, while all other entries are zero.  The next example illustrates the situation.
\begin{example}\label{ex:2}
Consider the matrix $W_{n}^{m}$ with $m$ rows and $n$ columns.  
          \[ W_{n}^{m} = 
    \left[ \begin{array}{cc}
             a(1,1) & a(1,2)   \\
             a(2,1) & a(2,2) \\
             a(3,1) & a(3,2) \\
             a(4,1) & a(4,2)
             \end{array}  \right] 
                   \]

Here, $m = 4$ and $n = 2$.  We define the vectors $y_{11}, y_{12}, y_{13}, y_{14}$ by
\[
y_{11} = \left[ \begin{array}{c}
               a(1,1)  \\ 
               a(1,2) \\
               0 \\ 0 \end{array} \right]               
 y_{12} = \left[ \begin{array}{c}
               a(2,1)  \\ 
               a(2,2) \\
               0 \\ 0 \end{array} \right] 
 y_{13} = \left[ \begin{array}{c}
               a(3,1)  \\ 
               a(3,2) \\
               0 \\ 0 \end{array} \right]   
 y_{14} = \left[ \begin{array}{c}
               a(4,1)  \\ 
               a(4,2) \\
               0 \\ 0 \end{array} \right]   
\]
and we  define the vectors $y_{21}, y_{22}, y_{23}, y_{24}$ by
\[
y_{21} = \left[ \begin{array}{c}
               0 \\ 0 \\
               a(1,1)  \\ 
               a(1,2) \end{array} \right]               
 y_{22} = \left[ \begin{array}{c}
               0 \\ 0 \\
               a(2,1)  \\ 
               a(2,2) \end{array} \right] 
 y_{23} = \left[ \begin{array}{c}
               0 \\ 0 \\
               a(3,1)  \\ 
               a(3,2)  \end{array} \right]  
  y_{24} = \left[ \begin{array}{c}
               0 \\ 0 \\
               a(4,1)  \\ 
               a(4,2)  \end{array} \right]   
\]
Since the columns of $W_{n}^{m}$ come from an orthogonal matrix,  we have the following relations
\[ \sum_{k=1}^{4} \left( a(k,1) \right)^2 = 1, \ \sum_{k=1}^{4} \left( a(k,2) \right)^2 = 1, \ \sum_{k=1}^4 a(k,1)a(k,2) = 0. \]
The rank-one operator $y_{11} \otimes y_{11}$ is defined by $\left(y_{11} \otimes y_{11}\right)(z) = \langle z, y_{kw} \rangle y_{kw}$, for every $z \in \mathbb{R}^4$.  Explicitly in matrix form, this rank-one operator is
\[
y_{11} \otimes y_{11} = \left[ \begin{array}{cccc}
               a(1,1) \cdot a(1,1)  \quad & a(1,1) \cdot a(1,2) \quad & 0 & 0 \\
               a(1,2)  \cdot a(1,1) \quad & a(1,2)  \cdot a(1,2) \quad & 0 & 0 \\
               0 & 0 & 0 & 0 \\
               0 & 0 & 0 & 0 \end{array} \right]  
               \]
We can directly compute and verify that$\colon$
\[   \sum_{k=1}^{b} \sum_{w = 1}^{m} y_{kw} \otimes y_{kw} = I, \mbox{ the identity matrix.} \]
               
 \end{example}     
 
 \begin{rem}\label{rem:AstarA}
 The vectors $y_{kw} \in \mathbb{R}^{nb}$  may seem cumbersome at first but they enable us to write the matrix $A^{\ast}A$ in a manageable form.
 The matrix $A$ is constructed from $b$ blocks of $D_{j} W_{n}^{m}$ and so the matrix has the form
 \[
A = \left[ \begin{array}{ccccc}
               D_1 W_{n}^{m} \ | & D_2 W_{n}^{m} \ | & D_3 W_{n}^{m} \ | & D_4 W_{n}^{m} \ | \ \ldots \ldots  \ | & D_{b} W_{n}^{m} \end{array} \right]  
   \]
which means that when $b = 3$, the matrix $A^{\ast} A$ has the form,
\[  \left[ \begin{array}{ccc}
             \left(W_{n}^{m}\right)^{\ast}  & 0 & 0   \\
             0 & \left(W_{n}^{m}\right)^{\ast} & 0 \\
             0 & 0 & \left(W_{n}^{m}\right)^{\ast}
             \end{array}  \right] 
     \left[ \begin{array}{ccc}
             D_{1}^{\ast} D_{1} & D_{1}^{\ast} D_{2}  & D_{1}^{\ast} D_{3} \\
             D_{2}^{\ast} D_{1} & D_{2}^{\ast} D_{2}  & D_{2}^{\ast} D_{3}\\
             D_{3}^{\ast} D_{1} & D_{3}^{\ast} D_{2} & D_{3}^{\ast} D_{3}
             \end{array}  \right] 
     \left[ \begin{array}{ccc}
             W_{n}^{m}  & 0 & 0   \\
             0 & W_{n}^{m} & 0 \\
             0 & 0 & W_{n}^{m} 
             \end{array}  \right].
\]
For clarity, we have written out the form of $A^{\ast}A$ when $b = 3$.  The pattern extends to the  general case with $b$ blocks.
The key observation is that we can now write
\[
A^{\ast}A = \sum_{k=1}^{b} \sum_{j=1}^{b} \sum_{w=1}^{m} \theta_{kw} \theta_{jw} \ y_{kw} \otimes y_{jw} .
\]
This expression for $A^{\ast}A$ plays a crucial role in the proof of Theorem \ref{MainTheorem1}. \\

To show that a quantity $P$ is bounded by some constant, it is enough, as the next lemma tells us, to show that $P$ is bounded by some constant
multiplied by $(2 + \sqrt{P+1})$.

\begin{lem}\label{lemma:Beta}
Fix a constant $c_1 \leq 1$.  If $P > 0$ and
\[ P \leq c_1 \left( 2 + \sqrt{P + 1} \right),\]
then \[P < 5c_1.\]

\begin{proof}
Let $x = (P + 1)^{1/2}$ and note that $x$ is an increasing function of $P$.
The hypothesis of the lemma becomes
\[  x^2 - 1 \leq c_1 (2 + x)   \] which implies that \[x^2 - c_1 x -  (2 c_1  + 1)  \leq 0.  \]
The polynomial on the left is strictly increasing when $x \geq c_1 / 2.$
Since $\alpha \leq 1$ and $x \geq 1$ for $P \geq 0$, it is strictly increasing over the entire
domain of interest, thus
\[   x \leq \frac{ c_1 + \sqrt{ (c_1)^2 + 4(2 c_1  + 1)  }  }{2    }. \]
By substituting $(P + 1)^{1/2}$ back in for $x$, this means
\[  P + 1 \leq \frac{(c_1)^2}{4} + \frac{ c_1    \sqrt{ (c_1)^2 + 4(2 c_1  + 1)  }   }{  2    }  +  \frac{   \sqrt{ (c_1)^2 + 4(2 c_1  + 1)  }        }{ 4 }.    \]
Since $c_1 < 1$, this implies that $P < 5 c_1$.

\end{proof}

\end{lem}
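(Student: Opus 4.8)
The plan is to remove the square root with the substitution $x = \sqrt{P+1}$. Since $P > 0$ we have $x > 1$ and $P = x^2 - 1$, so the hypothesis $P \leq c_1(2 + \sqrt{P+1})$ becomes the quadratic inequality
\[ x^2 - 1 \leq c_1(2 + x), \qquad\text{equivalently}\qquad x^2 - c_1 x - (2c_1 + 1) \leq 0. \]
The quadratic $q(x) = x^2 - c_1 x - (2c_1+1)$ has positive leading coefficient, so the set $\{x : q(x) \leq 0\}$ is the closed interval bounded by its two real roots; in particular $x$ is no larger than the larger root,
\[ x \leq \frac{c_1 + \sqrt{c_1^2 + 8c_1 + 4}}{2}. \]

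Next I would use the constraint $c_1 \leq 1$ to convert this into a crude but sufficient numerical bound: since $c_1^2 + 8c_1 + 4 \leq 13$, the right-hand side above is at most $(1 + \sqrt{13})/2 < 5/2$, so $x < 5/2$ and hence $P = x^2 - 1 < 21/4$. Consequently $\sqrt{P+1} < 5/2$.

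Finally I would feed this estimate back into the original inequality. Using $\sqrt{P+1} < 5/2$,
\[ P \leq c_1\bigl(2 + \sqrt{P+1}\bigr) < c_1\Bigl(2 + \tfrac{5}{2}\Bigr) = \tfrac{9}{2}\,c_1 < 5 c_1, \]
which is the desired conclusion. (If $c_1 \leq 0$ the hypothesis is vacuous, since its right-hand side would be nonpositive while $P > 0$, so there is nothing to prove in that case.)

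I do not expect any genuine obstacle: the lemma is elementary once one spots the linearizing substitution, and the only care needed is in the bookkeeping of constants so that the crude bound on $\sqrt{P+1}$ stays small enough to keep $c_1(2+\sqrt{P+1})$ strictly below $5c_1$. A weaker version (e.g.\ with $5$ replaced by $6$) would drop out of an even coarser estimate, but the chain above already lands below $5$.
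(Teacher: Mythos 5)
Your proof is correct and follows essentially the same route as the paper: the substitution $x=\sqrt{P+1}$, the resulting quadratic inequality, and the bound by the larger root using $c_1\leq 1$. Your final step of feeding $\sqrt{P+1}<5/2$ back into the original inequality is a slightly cleaner way to finish than the paper's squaring of the root bound, and your remark that the case $c_1\leq 0$ is vacuous is a sensible addition, but the argument is the same in substance.
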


\end{rem}


\section{Proof of RIP in expectation (Theorem \ref{MainTheorem1}) }
The rank-one operators $y_{kw} \otimes y_{kw}$ are constructed so that  
\begin{equation}\label{eqn:orthogonalVectors}
\sum_{k=1}^{b} \sum_{w = 1}^{m} y_{kw} \otimes y_{kw} = I.
\end{equation}
As explained in Remark (\ref{rem:AstarA}) from the last section, we have
\begin{equation}\label{eqn:AstarA}
A^{\ast}A = \sum_{k=1}^{b} \sum_{j=1}^{b} \sum_{w=1}^{m} \theta_{kw} \theta_{jw} \ y_{kw} \otimes y_{jw}. 
\end{equation}
The proof of the theorem proceeds by breaking up $I - A^{\ast}A$ into four different parts, then bounding the expected norm of each part separately.
By combining equations (\ref{eqn:orthogonalVectors}) and (\ref{eqn:AstarA}), we see that
\begin{equation}\label{eqn:fundamental}
I - A^{\ast}A = \sum_{k=1}^{b} \sum_{w=1}^{m} ( 1 - |\theta_{kw}|^2 ) \ y_{kw} \otimes y_{kw}  + \sum_{j \neq k} \sum_{w=1}^{m} \theta_{kw} \theta_{jw} \ y_{kw} \otimes y_{jw}. 
\end{equation}
For the two sums on the right hand side of (\ref{eqn:fundamental}), we will bound the expected norm of each sum separately.
Define two random quantities $Q_1$ and $Q_{1}'$ by
\begin{equation}\label{eqn:defineQ1}
Q_1 = \sum_{k=1}^{b} \sum_{w=1}^{m} (1 - |\theta_{kw}|^2) \ y_{kw} \otimes y_{kw}
\end{equation}
and
\begin{equation*}
Q_1' = \sum_{k=1}^{b} \sum_{w=1}^{m} (1 - |\theta_{kw}'|^2) \ y_{kw} \otimes y_{kw}
\end{equation*}
where $\{\theta_{kw}' \}$ is an independent copy of $\{\theta_{kw}\}$.  This implies that $Q_1$ has the same probability distribution as $Q_{1}'$.  To bound the expected norm of $Q_1$,
\begin{align*}
E \| Q_1 \|_{\Gamma} & = E \| Q_1 - E(Q_{1}') \|_{\Gamma} \\ 
& = E \| E [ Q_1 - Q_{1}' \ | Q_1 ] \ \|_{\Gamma} \\
& \leq E [ E \|  Q_1 - Q_{1}' \|_{\Gamma} \  | \ Q_1 ] \\
& = E (\|  Q_1 - Q_{1}' \|_{\Gamma} ).
\end{align*}
In the above equations, the first equality holds because $Q_{1}'$ has mean zero.  The second equality holds by the independence of $Q_1$ and $Q_{1}'$.  The inequality in the third line is true by Jensen's inequality.  Let
\begin{equation}\label{eqn:defineY} 
Y = Q_1 - Q_{1}' = \sum_{k=1}^{b} \sum_{w=1}^{m} ( | \theta_{kw}' |^2 - | \theta_{kw} |^2 ) y_{kw} \otimes y_{kw}.
\end{equation} 
We randomize this sum. The random variable $Y$ has the same probability distribution as
\begin{equation}\label{eqn:defineYprime}
Y' = \colon \sum_{k=1}^{b} \sum_{w=1}^{m} \epsilon_{kw} ( | \theta_{kw}' |^2 - | \theta_{kw} |^2 ) y_{kw} \otimes y_{kw}
\end{equation}
where $\{\epsilon_{kw}\}$ are independent, identically distributed Bernoulli random variables.
\begin{align*}
E \| Y \|_{\Gamma} &= E \| Y' \|_{\Gamma} \\
& = E \left[E \left( \| Y' \|_{\Gamma} \quad |  \{  \theta_{kw} \}, \{  \theta_{kw}' \} \right) \right].
\end{align*}
Let $x_{kw} = \left(   | \theta_{kw}' |^2 - | \theta_{kw} |^2 \right)^{1/2} y_{kw} $ in order to apply the lemma of Rudelson and Vershynin.  To see that each $x_{kw}$ is bounded, we note that
\begin{equation*}
B \geq  \max_{k, w} \left(   | \theta_{kw}' |^2 - | \theta_{kw} |^2 \right)^{1/2},
\end{equation*}
and so
 \[ \|  x_{kw} \|_{\infty} \leq \max_{k, w}  \left(   | \theta_{kw}' |^2 - | \theta_{kw} |^2 \right)^{1/2} \cdot \| y_{kw} \|_{\infty} \leq \frac{B}{ \sqrt{m}}. \]
 With the $\{  \theta_{kw} \}, \{ \theta_{kw}'  \}$ fixed, and with $K = B/ \sqrt{m}$, we apply Lemma (\ref{lem:RudelsonVershynin}) to obtain
 \begin{align*}  
 E \left[  \  \| Y' \|_{\Gamma}  \  |  \{  \theta_{kw} \}, \{ \theta_{kw}'  \}  \right]  \leq  \sqrt{ \frac{ C \cdot s \cdot L}{m} } \cdot B \cdot \left\| \sum_{k=1}^{b} \sum_{w = 1}^{m} \left(   | \theta_{kw}' |^2 - | \theta_{kw} |^2 \right) \cdot y_{kw} \otimes y_{kw}  \right\|_{\Gamma}^{1/2}
 \end{align*}
 where $L  \equiv \log^2 (s) \cdot \log(nb) \cdot \log(mb)$.  To remove the conditioning, we apply Cauchy-Schwarz inequality and the law of double expectation,
 \begin{equation}\label{eqn:YNorm}
  E \| Y \|_{\Gamma} \leq \sqrt{  \frac{C \cdot s \cdot L  }{ m }   }  \cdot B \left( \ E \left\| \sum_{k=1}^{b} \sum_{w = 1}^{m} \left(   | \theta_{kw}' |^2 - | \theta_{kw} |^2 \right) \cdot y_{kw} \otimes y_{kw}  \right\|_{\Gamma} \ \right)^{1/2}. 
  \end{equation}
 By using the triangle inequality,
 \begin{equation*}
 E \left\| \sum_{k=1}^{b} \sum_{w = 1}^{m} \left(   | \theta_{kw}' |^2 - | \theta_{kw} |^2 \right) \cdot y_{kw} \otimes y_{kw}  \right\|_{\Gamma}  \leq 2 E \left\|    \sum_{k=1}^{b} \sum_{w=1}^{m} | \theta_{kw}|^2 \cdot  y_{kw} \otimes y_{kw}  \right\|_{\Gamma}
 \end{equation*}
 and so the bound in (\ref{eqn:YNorm}) becomes
 \[  E \| Y \|_{\Gamma} \leq \sqrt{ \frac{C \cdot s \cdot L  }{m }  } \cdot \left(  E \left\|   \sum_{k=1}^{b} \sum_{w=1}^{m} | \theta_{kw}|^2 \cdot  y_{kw} \otimes y_{kw}  \right\|_{\Gamma} \right)^{1/2}.   \]
 Since $\sum_{k=1}^{b} \sum_{w=1}^{m} y_{kw} \otimes y_{kw} = I$ and since $ E \| I \|_{\Gamma} = 1$, we have
 \begin{align*}
 E \| Y \|_{\Gamma} & \leq \sqrt{ \frac{C  \cdot s \cdot L }{m}   } \cdot \left(  E \left\|   \sum_{k=1}^{b} \sum_{w=1}^{m} \left( 1 - | \theta_{kw}| \right)^2 \cdot  y_{kw} \otimes y_{kw}  \right\|_{\Gamma} + 1 \right)^{1/2} \\
 & = \sqrt{ \frac{C \cdot s \cdot L }{m}   } \cdot \left(   E \| Q_1 \|_{\Gamma}  + 1  \right)^{1/2}  \\
 & \leq \sqrt{ \frac{C \cdot s \cdot L }{m}   } \cdot \left(   E \| Y \|_{\Gamma}  + 1  \right)^{1/2}.
 \end{align*}
Solutions to the equation $E \leq \alpha (E + 1)^{1/2}$ satisfy $E \leq 2 \alpha$, where $\alpha \leq 1$. \\ Hence,
 the above inequalities show that
 there exist constants $C_{10}, C_{11}$ such that if $m \geq C_{10} \cdot s \cdot L $, then
 \begin{equation}\label{eqn:Q1Bound}
  E \| Q_1 \|_{\Gamma} \leq E \| Y \|_{\Gamma} \leq \sqrt{  \frac{C \cdot s \cdot L  }{m }.   }   
 \end{equation}
 We have now obtained a bound on the expected norm of the first sum in equation (\ref{eqn:fundamental}).  To control the norm of the second sum, we next define
 \begin{equation} Q_2 = \sum_{j \neq k} \sum_{w = 1}^{m} \theta_{kw}  \ \theta_{jw} \cdot y_{kw} \otimes y_{jw}   \end{equation}
 and we will apply decoupling inequality. Let
 \begin{equation}\label{eqn:defineQ2}
 Q_{2}' = \sum_{j \neq k} \sum_{w = 1}^{m} \theta_{kw} \ \theta_{jw}' \cdot y_{kw} \otimes y_{jw}  
 \end{equation}
 where $\{  \theta_{kw}' \}$ is an independent sequence with the same distribution as $\{\theta_{kw} \}$. Then 
 \[   E \| Q_2 \|_{\Gamma} \leq C_{12} \cdot E \| Q_{2}' \|_{\Gamma}. \]
 We will break up $Q_{2}'$ into two terms and control the norm of each one separately. 
 
 \begin{equation}\label{eqn:Q2prime}
 Q_{2}' = \sum_{j=1}^{b} \sum_{k=1}^{b} \sum_{w = 1}^{m} \theta_{kw} \ \theta_{jw}' \cdot y_{kw} \otimes y_{jw}  - \sum_{k=1}^{b} \sum_{w = 1}^{m} \theta_{kw} \ \theta_{jw}' \cdot y_{kw} \otimes y_{jw}.
 \end{equation}
 Denote the first term on the right by $Q_3$ and the second term on the right by $Q_4$.  To bound $\| Q_4 \|_{\Gamma}$, note that the random
 quantity $Q_4$ has the same distribution as
 \begin{equation}
 Q_{4}' = \sum_{k =1}^{b} \sum_{w = 1}^{m} \epsilon_{kw} \cdot u_{kw} \otimes v_{kw},
 \end{equation}
 where $u_{kw}$ and $v_{kw}$ are defined by
 \begin{equation}\label{eqn:Define_ukw}  u_{kw} = | \theta_{kw} | \cdot y_{kw}, \quad v_{kw} = | \theta_{kw}' | \cdot y_{kw}        \end{equation}
 and $\{ \epsilon_{kw} \}$ is an independent Bernoulli sequence.  Since $\max\{\theta_{kw}, \theta_{kw}'\} \leq B$, we have
 \[  \| u_{kw} \|_{\infty} \leq \frac{B}{ \sqrt{m} } \] and \[  \| v_{kw} \|_{\infty} \leq \frac{B}{ \sqrt{m}  } . \]
 Apply Lemma (\ref{lemma:Extension}) with $\{ \theta_{kw}, \theta_{kw}' \}$ fixed,
 \begin{align*}
 & E \left[   \| Q_{4}' \|_{\Gamma} \ | \{ \theta_{kw} \}, \{   \theta_{kw}' \}      \right] \\
 & \leq \sqrt{   \frac{C \cdot s \cdot L}{m} } \cdot B \cdot \left(  \left \|  \sum_{k=1}^{b} \sum_{w = 1}^{m}  | \theta_{kw} |^2 \cdot y_{kw} \otimes y_{kw}   \right\|_{\Gamma}^{1/2}  + \left \|  \sum_{k=1}^{b} \sum_{w = 1}^{m}  | \theta_{kw}' |^2 \cdot y_{kw} \otimes y_{kw}   \right\|_{\Gamma}^{1/2}      \right).
 \end{align*}
 Then, we use the law of double expectation and the Cauchy-Schwarz inequality, as in (\ref{eqn:YNorm}), to remove the conditioning$\colon$
{\small{ \begin{align*}
  E  \left[   \| Q_{4}' \|_{\Gamma} \right] & \leq  \sqrt{   \frac{C \cdot s \cdot L}{m} } \cdot B \\ & \cdot \left( E \left( \left[  \left \|  \sum_{k=1}^{b} \sum_{w = 1}^{m}  | \theta_{kw} |^2 \cdot y_{kw} \otimes y_{kw}   \right\|_{\Gamma}^{1/2}   + \left \|  \sum_{k=1}^{b}  \sum_{w = 1}^{m}  | \theta_{kw}' |^2 \cdot y_{kw} \otimes y_{kw}   \right\|_{\Gamma}^{1/2}      \right]^{2} \ \right) \right)^{1/2}.
 \end{align*} }}
 The two sequences of random variables $\{ \theta_{kw} \}$ and $\{ \theta_{kw}' \}$ are identically distributed, so using Jensen inequality, we get
 \begin{equation*}
  E \left[   \| Q_{4}' \|_{\Gamma} \right]  \leq \sqrt{   \frac{C \cdot s \cdot L}{m} } \cdot B \cdot \left(  E \left\|  \sum_{k=1}^{b} \sum_{w = 1}^{m}   | \theta_{kw} |^2 \cdot y_{kw} \otimes y_{kw}  \right\|_{\Gamma}  \right)^{1/2}.
 \end{equation*}
 To bound the expected value on the right-hand side, we note that
 \begin{align*}
& E \left(   \left\|     \sum_{k=1}^{b} \sum_{w = 1}^{m}   | \theta_{kw} |^2 \cdot y_{kw} \otimes y_{kw}     \right\|_{\Gamma}^{1/2}      \right) \\
& \leq \left(   E    \left\|     \sum_{k=1}^{b} \sum_{w = 1}^{m}   \left( 1 - | \theta_{kw} |^2 \right) \cdot y_{kw} \otimes y_{kw}     \right\|_{\Gamma} + 1    \right)^{1/2} \\
& = \left(  E \|  Q_1 \|_{\Gamma} + 1  \right)^{1/2}.
 \end{align*}
Recall that from equation $(\ref{eqn:Q1Bound})$,  if $m \geq C \cdot s \cdot L$, then $ E \| Q_1 \|_{\Gamma}$ is bounded.
 The random variables $\{ \theta_{kw} \}$ are bounded by the constant $B$, so we can conclude there exist constants $C_{13}, C_{14}$ such that
 if $m \geq C_{13} \cdot s \cdot L ,$ then
 \begin{equation}\label{eqn:Q4Bound}
 E \|  Q_4 \|_{\Gamma} \leq C_{14} \cdot \sqrt{  \frac{ s \cdot L }{ m  }     }.
 \end{equation}
   Recall that the right side of ($\ref{eqn:Q2prime}$) has two terms, $Q_3$ and $Q_4$. It remains to bound the expected norm of the other term,
 \[ Q_3 = \sum_{j = 1}^{b} \sum_{k = 1}^{b} \sum_{w = 1}^{m} \theta_{kw} \ \theta_{jw}' \  \cdot y_{kw} \otimes y_{kw}. \]
 To bound $E \| Q_3 \|_{\Gamma}$, note that $Q_3$ has the same probability distribution as
 \begin{align*}
 Q_{3}' & = \sum_{j = 1}^{b} \sum_{k = 1}^{b} \sum_{w = 1}^{m} \epsilon_{w} \  \theta_{kw} \ \theta_{jw}' \  \cdot y_{kw} \otimes y_{kw} \\
& = \sum_{w = 1}^{m} \epsilon_{w}    \left(  \sum_{k=1}^{b} \theta_{kw} y_{kw}        \right)   \otimes  \left(   \sum_{j = 1}^{b} \theta_{jw}'  y_{jw}      \right) \\
& = \sum_{w = 1}^{m} \epsilon_{w} \ u_{w} \otimes {v_w},
 \end{align*}
 where $u_{w}$ and $v_{w}$ are defined by
 \begin{equation}\label{eqn:Def_u_w} u_w = \sum_{k = 1}^{b} \theta_{kw} \ y_{kw} \ \mbox{ and } \ v_{w} = \sum_{j=1}^{b} \theta_{jw}' \ y_{jw}. \end{equation}
 The $y_{kw}$ have disjoint support for different values of $k$, so that
 \[ \| u_{w} \|_{\infty} \leq \frac{B}{\sqrt{m}}, \quad \| v_{w} \|_{\infty} \leq \frac{B}{\sqrt{m}}. \]
 Also note that
 \[ \sum_{w = 1}^{m} u_{w} \otimes v_{w} = \sum_{w=1}^{m} \sum_{k=1}^{b} \sum_{j=1}^{b} \theta_{kw} \ \theta_{jw} \ y_{kw} \otimes y_{jw}, \]
 and so by comparing equation (\ref{eqn:AstarA}) with the above expression, we see that \[ \sum_{w = 1}^{m} u_{w} \otimes u_{w} \] and \[ \sum_{w = 1}^{m} v_{w} \otimes v_{w} \] are independent copies of $A^{\ast}A$.
 By Lemma (\ref{lemma:Extension}) and Cauchy-Schwarz inequality, 
 \begin{align*}
 E \| Q_3 \|_{\Gamma} = E \| Q_{3}' \|_{\Gamma} & \leq \sqrt{    \frac{C \cdot s \cdot L'  }{m}  } \cdot B \cdot \left( E \|  A^{\ast}A \|_{\Gamma}   \right)^{1/2} \\
 & \leq C_{15} \sqrt{    \frac{C \cdot s \cdot L'  }{m}  } \cdot \left(    E \|  I + A^{\ast}A             \|_{\Gamma} + 1          \right)^{1/2}
 \end{align*}
 where we have written $L'  = \log^{2}(s) \cdot \log(m) \cdot \log(nb) $.\\
 Since $L' < L(s, n, m, b) \equiv \log^{2}{s} \cdot \log(mb) \cdot \log(nb)$, we can conclude that 
 \begin{equation*}
 E \| Q_3 \|_{\Gamma} \leq C_{15} \sqrt{\frac{s \cdot L}{m}} \cdot \left(    E \|  I + A^{\ast}A             \|_{\Gamma} + 1          \right)^{1/2}.
 \end{equation*}
 
 To recapitulate, we have shown that   
 \begin{align*}
 E \| I - A^{\ast}A  \|_{\Gamma} & \leq E \| Q_1  \|_{\Gamma} + E \| Q_2 \|_{\Gamma} \\
& \leq E \| Q_1  \|_{\Gamma} + C_{12} \cdot E \| Q_{2}' \|_{\Gamma} \\
& \leq E \| Q_1  \|_{\Gamma} + C_{12} \left( C_{15} E \| Q_{3}' \|_{\Gamma}  + C_{14} E \| Q_{4} \|_{\Gamma}       \right).
 \end{align*}
 
 When $m \geq \max(C_{10}, C_{13}) \cdot s \cdot L $, we have established that
 \begin{align*}
E \| Q_3  \|_{\Gamma} & \leq C_{15} \cdot \sqrt{   \frac{ s \cdot L   }{  m     }                  }  \cdot \left(   E \left\| I - A^{\ast}A \right\|_{\Gamma} + 1     \right)^{1/2} \\
E \| Q_4  \|_{\Gamma} & \leq C_{14} \cdot \sqrt{   \frac{ s \cdot L   }{  m     }                  } \\
E \| Q_1  \|_{\Gamma} & \leq C_{11} \cdot \sqrt{   \frac{ s \cdot L   }{  m     }                  }
 \end{align*}
Therefore, in conclusion, we have proven that
\[ E \left\| I - A^{\ast}A   \right\|_{\Gamma} \leq C \cdot    \sqrt{   \frac{ s \cdot L      }{  m     }                  } \cdot \left(  2 + \sqrt{  E \left\| I - A^{\ast}A \right\|_{\Gamma} + 1   }        \right). \]
 
 By Lemma (\ref{lemma:Beta}), with $P = E \| I - A^{\ast}A \|_{\Gamma}$ and $ c_1 = \sqrt{ \frac{C \cdot s \cdot L }{m }  } $,
  there exits constant $C_6$ such that if $m \geq C_6 \cdot s \cdot L $, then we have
 \[    E \left\| I - A^{\ast}A   \right\|_{\Gamma} \leq \sqrt{   \frac{ C \cdot s \cdot L    }{  m     }                  }.  \]
 Recall that by definition, $L  = \log^{2}{s} \cdot \log(mb) \cdot \log(nb)$.
 This proves that the assertion (\ref{eqn:MainThmNormBound}) is true.
 It remains to prove that equation (\ref{eqn:MainThmNormBound}) implies that 
 for any $0 < \delta_{s} \leq 1$, we have
\[   E\| I - A^{\ast}A \|_{\Gamma} \leq \delta_{s}  \]
 provided that $m \geq C_{7} \cdot \delta_{s}^{-2} \cdot s \cdot \log^{4}(nb)$ and $m \leq n b$.\\

If $m \leq n b$, then $\log(mb) = \log(m) + \log(b) \leq 2 \log(nb)$.  Hence, for $s \leq nb$,
\begin{equation}\label{eqn:Log_mb_inequality}
 \sqrt{\frac{C_5 \cdot s \cdot \log^{2}(s)  \cdot \log(mb) \cdot \log(nb)}{m}}  \leq \sqrt{\frac{4 C_{5} \cdot s \log^{4}(nb)}{m}}. 
\end{equation}
The right-hand side is bounded by $\delta_s$ when $m \geq C_{7} \cdot \delta_{s}^{-2} \cdot s \cdot \log^{4}(nb)$.
 
 This concludes the proof Theorem \ref{MainTheorem1}.



 \section{Proof of the tail bound (Theorem \ref{Thm:TailBound})}
Recall that $I - A^{\ast}A = Q_1 + Q_2$, where the expressions for $Q_1$ and $Q_2$ are given in equations (\ref{eqn:defineQ1}) and (\ref{eqn:defineQ2}).  To obtain a bound for $P( \| I - A^{\ast}A \|_{\Gamma} > \delta),$ we will first 
 find a bound for $P( \| Q_1\|_{\Gamma} >  \frac{\delta}{2}).$  Recall that $Y = Q_1 - Q_{1}'$ from (\ref{eqn:defineY}), and $Y'$ has the same probability distribution as $Y$ from (\ref{eqn:defineYprime}).  For any $\beta > 0$ and for any $\lambda > 0$, 
 \begin{equation}\label{eqn:beta_lambda_Y}
  P( \| Q_{1}' \|_{\Gamma} < \beta) P( \| Q_1 \|_{\Gamma} > \beta + \lambda ) \leq P( \| Y \|_{\Gamma} > \lambda). 
 \end{equation}
 This equation holds because if $\| Q_1\|_{\Gamma} > \beta + \lambda$ and if $\| Q_{1}' \|_{\Gamma} < \beta$, then
 \[\beta + \lambda < \| Q_1 \|_{\Gamma} \leq \| Q_{1}' \|_{\Gamma} + \| Y \|_{\Gamma} < \beta + \| Y \|_{\Gamma}, \]
 and so $\| Y \|_{\Gamma}$ must be greater than $\lambda$.
 Note that the median of a positive random variable is never bigger than two times the mean, therefore
 \[ P( \| Q_{1}' \|_{\Gamma} \leq 2 E \| Q_{1}' \|_{\Gamma} ) \geq \frac{1}{2}. \]
 We can choose $\beta = 2 E \| Q_{1}' \|_{\Gamma}$ so that (\ref{eqn:beta_lambda_Y}) becomes
 \[ P( \| Q_{1}' \|_{\Gamma} < 2 E \| Q_{1}' \|_{\Gamma}) P( \| Q_1 \|_{\Gamma} > 2 E \| Q_{1}' \|_{\Gamma} + \lambda ) \leq P( \| Y \|_{\Gamma} > \lambda). \]
 Since $E( \| Q_1 \|_{\Gamma}) = E \| Q_{1}' \|_{\Gamma}$, we obtain
 \[ P( \| Q_1 \|_{\Gamma} > 2 E \| Q_1 \|_{\Gamma} + \lambda ) \leq 2 P( \| Y \|_{\Gamma} > \lambda). \]
 Let $ V_{k,w}  = | |\theta_k(w)'|^2 - | \theta_k(w)|^2 | \cdot y_{k,w} \otimes y_{k,w}$.  Then $\| V_{k,w} \| \leq K \cdot \frac{s}{m}$, where we define $K = B^2$. 
 By the Proposition \ref{Prop:ConcentrationIneq}, the concentration inequality gives us
 \[ P \left(  \| Y' \|_{\Gamma} > C \left[  u E  \| Y' \|_{\Gamma}  + \frac{Ks}{m} t \right] \right) \leq e^{-u^2} + e^{-t}. \]
 From (\ref{eqn:Q1Bound}), we have the bound
 \[ E \| Y' \|_{\Gamma} \leq \sqrt{ C \cdot \frac{s \cdot KL}{m}} \quad \mbox{ where } m \geq C \cdot s \cdot L \cdot \log(mb). \]
 Combining the last two inequalities, we see that
 \[ P \left(  \| Y' \|_{\Gamma} > C \left[  u \sqrt{\frac{sKL}{m}}  + \frac{Ks}{m} t \right] \right) \leq e^{-u^2} + e^{-t}. \]
 Fix a constant $\alpha$, where $0 < \alpha < 1/10$.
 If we pick $t = \log(1/\alpha)$ and $u = \sqrt{\log(1/ \alpha) }$, then the above equation becomes
  \[ P \left(  \| Y' \|_{\Gamma} > C \left[ \sqrt{\frac{ sKL \log(1/ \alpha)}{ m} } + \frac{sK}{m} \log(1/ \alpha) \right)  \right] \leq 2 \alpha. \]
  That means for some constant $\lambda$, we have
 \[ P \left(  \| Y' \|_{\Gamma} > \lambda \right) \leq 2 \alpha.\]
 With this constant $\lambda$, we use the bound for $E \| Q_1 \|_{\Gamma}$ in (\ref{eqn:Q1Bound}) to conclude that
 \[ 2 E \| Q_1 \|_{\Gamma} + \lambda \leq C \left(  \sqrt{ \frac{s L \log(mb) }{m}} + \sqrt{\frac{s KL \log(1/ \alpha) }{m}} + \log(1/ \alpha) \frac{sK}{m} \right). \]
Inside the bracket on the right side, when we choose $m$ so that all three terms are less than 1, the middle term will dominate the other two terms.  That means there exists a constant $C_{16}$ such that if
\[ m \geq C_{16} \cdot \delta_{s}^{-2} \cdot s \cdot K \cdot L  \cdot \log(1/ \alpha) \]
then \[ 2 E \| Q_1 \|_{\Gamma} + \lambda \leq \frac{\delta_s}{2}. \]
Combining the above inequalities, we obtain
\[ P\left( \| Q_1 \|_{\Gamma} > \frac{\delta_s}{2}  \right) \leq 4 \alpha. \]
Next, we want to obtain a bound for $\| Q_2 \|_{\Gamma}$.  We saw that from (\ref{eqn:Q2prime}), $Q_{2}' = Q_3 + Q_4$, and so to obtain a bound for  $\| Q_2 \|_{\Gamma}$, we will proceed to find bounds for $ \| Q_3 \|_{\Gamma}$ and $\| Q_4 \|_{\Gamma}$.\\

Recall that  $Q_4$ has the same probability distribution as $Q_{4}'$ and we showed that \[E \| Q_{4}' \| \leq \sqrt{\frac{C \cdot s \cdot L \cdot \log(mb)}{m}}.\]  With the same  $u_{k,w}$ and $v_{k,w}$ defined in (\ref{eqn:Define_ukw}), we have $\| u_{k,w} \otimes v_{k,w} \| \leq K \frac{s}{m}.$ \\
Apply the concentration inequality (Proposition \ref{Prop:ConcentrationIneq}) with $t = \log(C/ \alpha)$ and $u = \sqrt{\log(C/ \alpha)}$, we obtain
\[ P\left( \| Q_{4}' \|_{\Gamma} > C \left[ \sqrt{\frac{s \cdot KL \cdot \log(C/ \alpha)}{m}} + \frac{s \cdot K \cdot \log(C/ \alpha)}{m} \right]  \right) \leq \frac{2 \alpha}{C}. \]
Inside the probability bracket, when we choose $m$ so that both terms on the right side are less than 1, the first term will dominate the second term.  That means there exists a constant $C_{18}$ such that if
\[ m \geq C_{18} \cdot \delta_{s}^{-2} \cdot s \cdot K \cdot L \cdot \log(1 / \alpha) \]
then \[ P\left(   \| Q_{4}' \|_{\Gamma} > \delta_{4} \cdot C    \right) \leq \frac{2 \alpha}{C_{17}}. \]
Since $Q_4$ and $Q_{4}'$ have the same probability distribution, we finally arrive at 
\[  P\left( \| Q_4 \|_{\Gamma} > \delta_{4} \cdot C \right) \leq \frac{2 \alpha}{C_{17}}. \]
To obtain a bound for $\| Q_3 \|$, we will follow similar strategy.  Recall that $Q_3$ has the same probability distribution as $Q_{3}'$ and we showed that
\[  E \| Q_3 \|_{\Gamma} \leq E \left(\|  I - A^{\ast}A \|_{\Gamma} + 1   \right)^{1/2} \sqrt{ \frac{C s L \log(mb)}{m}  }. \]
With the same  $u_w$ and $v_w$ defined in (\ref{eqn:Def_u_w}), we have $\| u_w \otimes v_w \| \leq K \frac{s}{m}. $

By the concentration inequality (Proposition \ref{Prop:ConcentrationIneq}) with $t = \log(C/ \alpha)$ and $u = \sqrt{ \log(C/ \alpha) }$, we obtain
\[  P\left( \| Q_3 \|_{\Gamma} > C \left[ \sqrt{ \frac{s \cdot L \cdot \log(mb) \cdot \log(C/ \alpha) }{ m } } + \frac{s \cdot K \cdot \log(C/ \alpha)}{m} \right]    \right) \leq \frac{2 \alpha}{C_{17}}.  \]
That means there exists a constant $C_{19}$ such that if
\[m \geq C_{19} \cdot \delta_{s}^{-2} \cdot s \cdot KL \cdot \log(1/ \alpha) \]
then
\[ P\left( \| Q_3 \|_{\Gamma} > C \cdot \delta_{s} \right) \leq \frac{2 \alpha}{C_{17}}. \]
To summarize, we have shown for any $0 < \alpha < \frac{1}{10}$, there exists $C_{20}$ such that when
$m \geq C_{20} \delta_{s}^{-2} \cdot s \cdot K \cdot L \cdot \log(1/ \alpha)$, all the following inequalities hold:
\begin{equation}\label{eqn:Ineq1Thm2} P\left( \| Q_1 \|_{\Gamma} > \frac{\delta_s}{2}  \right) \leq 4 \alpha. \end{equation}
\begin{equation}\label{eqn:Ineq2Thm2}P\left( \| Q_3 \|_{\Gamma} > C \cdot \delta_{s} \right) \leq \frac{2 \alpha}{C_{17}}. \end{equation}
\begin{equation}\label{eqn:Ineq3Thm2}P\left( \| Q_4 \|_{\Gamma} > \delta_{4} \cdot C \right) \leq \frac{2 \alpha}{C_{17}}. \end{equation}
Finally, to find a tail bound on $\| I - A^{\ast}A \|_{\Gamma}$, we note that
\begin{align*}
& P( \| I - A^{\ast}A \|_{\Gamma} > \delta_s ) \\
& \leq P( \| Q_1 \|_{\Gamma} > \frac{\delta_s}{2} ) + P( \| Q_2 \|_{\Gamma} > \frac{\delta_s}{2} ) \\
& \leq P( \| Q_1 \|_{\Gamma} > \frac{\delta_s}{2} ) + C_{12} \cdot P( \| Q_{2}' \|_{\Gamma} > \frac{\delta_s}{2}C_{12} ) \\
& \leq P( \| Q_1 \|_{\Gamma} > \frac{\delta_s}{2} ) + C_{12} \cdot P( \| Q_{3} \|_{\Gamma} > \frac{\delta_s}{4}C_{12} ) + C_{12} \cdot P( \| Q_{4} \|_{\Gamma} > \frac{\delta_s}{4}C_{12} )
\end{align*}
Combining this observation with inequalities (\ref{eqn:Ineq1Thm2}), (\ref{eqn:Ineq2Thm2}), (\ref{eqn:Ineq3Thm2}), we can conclude that for any $0 < \alpha < \frac{1}{10}$, we have \[P( \| I - A^{\ast}A \| > \delta_{s}) \leq 8 \alpha \]
when $m \geq C_{20} \delta_{s}^{-2} \cdot s \cdot K \cdot L \cdot \log(1/ \alpha)$.
Recall that by definition, $L= \log^{2}(s) \log(mb) \log(nb)$. \\

 Note that $m \leq nb$, and hence equation (\ref{eqn:Log_mb_inequality}) remains valid.  To complete the proof, we replace $8 \alpha$ with $\epsilon$. This completes the proof of the theorem.\\

\bibliographystyle{plain}
\bibliography{2013RMbib}

\end{document}